\documentclass[12pt,a4paper,oneside,reqno,english]{amsart}
\usepackage[T1]{fontenc}
\usepackage[utf8]{inputenc}
\usepackage{verbatim}
\usepackage{amsmath}
\usepackage{amstext}
\usepackage{amsthm}
\usepackage{stmaryrd}
\usepackage{setspace}
\makeatletter

\pdfpageheight\paperheight
\pdfpagewidth\paperwidth

\numberwithin{equation}{section}
\numberwithin{figure}{section}
\theoremstyle{plain}
\newtheorem{thm}{\protect\theoremname}[section]
\theoremstyle{remark}
\newtheorem{rem}[thm]{\protect\remarkname}
\theoremstyle{plain}
\newtheorem{lem}[thm]{\protect\lemmaname}
\theoremstyle{definition}
\newtheorem{definition}{Definition}

\usepackage{amsfonts}
\usepackage{amsthm}
\usepackage{epsfig}
\usepackage{mathrsfs}

\@ifundefined{definecolor}
 {\usepackage{color}}{}

\usepackage{bm}

\usepackage{graphicx}
\usepackage{rotating}
\usepackage{longtable}
\usepackage{pdflscape}
\usepackage{booktabs}
\usepackage{multirow}
\usepackage{float}
\usepackage{makecell}

\usepackage[ruled,linesnumbered]{algorithm2e}

\usepackage[noabbrev,capitalise]{cleveref}

\makeatletter

\newcommand{\Rmnum}[1]{\expandafter\@slowromancap\romannumeral#1@}\makeatother

\numberwithin{equation}{section}

\allowdisplaybreaks

\newcommand{\defs}{:=}

\DeclareSymbolFont{lettersA}{U}{pxmia}{m}{it}
\DeclareMathSymbol{\piup}{\mathord}{lettersA}{"19}

\makeatother

\makeatother

\usepackage{babel}
\providecommand{\lemmaname}{Lemma}
\providecommand{\problemname}{Problem}
\providecommand{\remarkname}{Remark}
\providecommand{\theoremname}{Theorem}

\begin{document}
\title[]{On the singular layer for the 1-D steady Euler system with heat conductivity in a finite nozzle as the heat conduction coefficient vanishes}
\author{Su Jiang}

\address{S. Jiang: School of Mathematics and Statistics,
	Xiamen University of Technology, Xiamen 361024, China}
\email{\texttt{js17854295180@alumni.sjtu.edu.cn}}

\keywords{Singular layer; Heat conductivity; Asymptotic analysis; Euler equations; Flat Nozzle}
\subjclass[2020]{35A21, 35Q31, 80M35, 35F60}

\date{\today}
\newpage
\begin{abstract}
This paper investigates the singular limit problem of the one-dimensional steady compressible Euler system with heat conduction in a finite nozzle. The existence of the heat conduction solutions were established and boundary layers at one or two nozzle endpoints occur as the coefficient of heat conductivity goes to zero. The asymptotic behavior of the solutions vary depending on the state at the entrance and the value of the pressure at the exit, which classify the solutions into purely supersonic solution, purely subsonic solution and transonic solution. Moreover, we perform a formal asymptotic expansion of solutions and derive the boundary layer equations.

\end{abstract}

\maketitle
\tableofcontents{}

\section{Introduction}\label{section1}
In this paper we are concerned with the existence of the solutions for one-dimensional (1-D) steady flows with heat conduction in a finite nozzle as well as their asymptotic behavior as the coefficient of heat conductivity goes to zero. The flow is assumed to be governed by the following equations, called (1-D) steady Euler system with heat conductivity in this paper: 
\begin{align}
	 &\partial_{x}(\rho u)        =0,\label{Euler11} \\
	&\partial_{x}(\rho u^{2}+p)  =0,\label{Euler12} \\
	&\partial_{x}(\rho u \Phi)   = \epsilon \partial_{xx}e,\label{Euler13}
\end{align}
where $u$, $\rho$, $p$ stand for the velocity, density and the pressure of the flow respectively, and $\Phi:=\frac{1}{2}u^{2}+e$ is the Bernoulli constant with $\displaystyle e:=\frac{1}{\gamma-1}\cdot \frac{p}{\rho}$ being the internal energy. Here, $\epsilon$ is the coefficient of the heat conductivity. 
Let $c:=\sqrt{\partial_{\rho}p}=\sqrt{\gamma p/ \rho}$ be the sound speed and $M:=\displaystyle\frac{|u|}{c}$ be the Mach number, then the flow is supersonic in case $u>c$, or equivalently, $M>1$, and it is subsonic in case $u<c$, or equivalently, $M<1$.

Assume
\begin{equation}
    \mathcal{N}:=\{x : 0<x<1\}
\end{equation}
be the bounded nozzle. Set $U=(u, \rho, p)$ be the state of the fluid. Then for given positive constants $q_{in}$, $\rho_{in}$, $p_{in}$, $p_{e}$, we consider a boundary value problem for \eqref{Euler11}, \eqref{Euler12}, \eqref{Euler13} in the nozzle with boundary conditions 
\begin{equation}\label{bd}
    U(0)=U_{in}, \quad p(1)=p_{e},
\end{equation}
where $U_{in}:=(q_{in}, \rho_{in}, p_{in})$. The natural question arises here: does the solution exist for equations \eqref{Euler11}, \eqref{Euler12}, \eqref{Euler13} subject to the general boundary conditions \eqref{bd}? If exists, is the solution unique?
Furthermore, as $\epsilon$ goes to $0$, equations \eqref{Euler11}, \eqref{Euler12}, \eqref{Euler13} formally converge to the following ideal Euler system:
\begin{equation}\label{Euler}
	\left\{
	\begin{aligned}
		 & \partial_{x}(\rho u)        =0, \\
		 & \partial_{x}(\rho u^{2}+p)  =0, \\
		 & \partial_{x}(\rho u \Phi)   =0.
	\end{aligned}
	\right.
\end{equation}
However, the prescribed boundary conditions \eqref{bd} are generally overdetermined for the limiting problem.  Then the limit may not be a Euler solution satisfying  boundary conditions \eqref{bd}. So if the solution exists, what is the asymptotic behavior of the solution as the coefficient of the heat conductivity $\epsilon$ tends to $0$? In fact, layers at the entrance or the exit of the nozzle may develop, the above boundary value problem \eqref{Euler11}, \eqref{Euler12}, \eqref{Euler13}, \eqref{bd} with heat conductivity and the analysis for the asymptotic behavior of the solution become a singular limit problem. 

Our goal in this paper is trying to establish the existence of the solution to system \eqref{Euler11}, \eqref{Euler12}, \eqref{Euler13} with given boundary conditions \eqref{bd} at the entrance and the exit of the nozzle, then to analyze the asymptotic behavior of the solutions as the coefficient of the heat conductivity vanishes. In fact, the existence result of the solutions for system \eqref{Euler11}, \eqref{Euler12}, \eqref{Euler13} with heat conductivity will be shown in the following arguments. It can be also observed that both constant function and pice-wise constant function satisfy the ideal Euler system \eqref{Euler} with the discontinuity point being arbitrary in the nozzle. However, the heat conductivity $\epsilon-solution$ \textbf{do not} converge to a solution of the inviscid system \eqref{Euler} as $\epsilon$ tends to $0$, even in the weak sense. Specifically speaking, as the uniform state of the flow at the entrance is supersonic, the solutions of system \eqref{Euler11}, \eqref{Euler12}, \eqref{Euler13} are classified as either purely supersonic or transonic (supersonic-subsonic), according to the value of the exit pressure. Moreover, we shall see that a "boundary layer" may be formed for the heat-conductivity solution at either one side of the nozzle as $\epsilon \to 0$. By contrast, as the uniform state of the flow at the entry of the nozzle is subsonic, depending on the exit pressure range, the solutions of system \eqref{Euler11}, \eqref{Euler12}, \eqref{Euler13} can be classified into either purely subsonic solutions or transonic (subsonic-supersonic) solutions. However, the boundary layer forms at both $x=0$ and $x=1$ for the transonic solutions whereas it arises only at the exit of the nozzle for the pure subsonic solutions as $\epsilon$ goes to $0$.  
This paper will give a strict specific description for this layer mathematically. 

It is worth to mention that the "boundary layer" is a singular layer means that the limit of the $\epsilon$-heat conductivity solution $U^{\epsilon}(x)$ at the endpoints can not match with the value of ideal Euler system as the coefficient of heat conductivity $\epsilon$ goes to zero, which is different with the physical boundary layer adjacent to the surface of the body in multi-dimensional space, for instance, see \cite{OleinikSamokhin1999}, \cite{Schlichting2017}.

\subsection{The special solutions for ideal Euler equations \eqref{Euler}}\label{1.1}

As the coefficient of the heat conductivity $\epsilon$ goes to $0$, the equations \eqref{Euler11}, \eqref{Euler12}, \eqref{Euler13} formally converges to the ideal Euler system \eqref{Euler}. Let 
\begin{equation}\label{U0}
    U_{0}:=(q_{0},\rho_{0},p_{0}),
\end{equation}
then for the given uniform supersonic state $U_{in}=U_{0}$ at the entrance, namely $M_{0}>1$, there exists a unique subsonic solution 
\begin{equation}\label{U1}
    U_{1}:= (q_{1}, \rho_{1}, p_{1}),
\end{equation}
which connects the supersonic state $U_{0}$ through a shock, and across the shock front, the following Rankine-Hugoniot conditions hold:
\begin{equation}\label{R-H}
	\begin{aligned}
		 & \rho_{0}q_{0} =\rho_{1}q_{1} ,          \\
		 & \rho_{0}q^{2}_{0}+p_{0}  =\rho_{1}q^{2}_{1}+p_{1}, \\
		 & \Phi_{0}  =\Phi_{1}.
	\end{aligned}
\end{equation}
Then it can be easily verified that this piece-wise smooth function 
\begin{equation}\label{normal_shock_sol}
    \overline{U}(x):=
    \left\{
        \begin{aligned}
            &U_{0},\quad 0\leq x <x_{s},\\
            &U_{1},\quad x_{s} < x \leq 1
        \end{aligned}
    \right.
\end{equation}
gives a normal transonic shock solution to system \eqref{Euler}, where $0<x_{s}<1$ is the position of the shock front and can be arbitrary in the nozzle. Therefore, it seems that there exists infinite transonic shock solution \eqref{normal_shock_sol} to ideal Euler equations \eqref{Euler}.  Moreover, this special normal shock solution satisfies the entropy condition $p_{1}>p_{0}$. It can be also checked that for any $x\in \mathcal{N}$,
\begin{equation}\label{normal_sup}
    \overline{U}_{-}(x):=U_{0} 
\end{equation}
and 
\begin{equation}\label{normal_sub}
  \overline{U}_{+}(x):=U_{1}  
\end{equation}
satisfy system \eqref{Euler}. Namely, functions \eqref{normal_sup} and \eqref{normal_sub} give a constant solution to the inviscid Euler equations \eqref{Euler}. Then we introduce the concepts of special supersonic solution, subsonic solution and  transonic shock solution to the ideal Euler system \eqref{Euler}.
\begin{definition}
   For any $x\in \mathcal{N}$, $\overline{U}_{-}(x)$ ( $\overline{U}_{+}(x)$,  $\overline{U}(x)$) is called the special supersonic solution (correspondingly, subsonic solution and transonic shock solution) to Euler equations \eqref{Euler}.
\end{definition}

\subsection{The singular limit problem and main results }\label{section1.2}

In this subsection, we provide a detailed description of the singular limit problem for fluids with heat conduction. 

Given $U_{in}$ at the entrance and $p_{e}$ at the exit of the nozzle, then we are going to investigate the following singular problem as $\epsilon \to 0+$, and try to give a concrete analysis for the singular layer in detail. That is,

\underline{\textbf{\textit{The singular limit problem (SLP):}}}
\begin{enumerate}
    \item \textbf{For any $\epsilon>0$, try to find a solution for system \eqref{Euler11}, \eqref{Euler12}, \eqref{Euler13} subjected to the following boundary conditions} 
    \begin{align}
        &U(0)=U_{in},\label{bd1}\\
        &p(1)=p_{e};\label{bd2}
    \end{align}
    \item \textbf{As $\epsilon \to 0$, investigate the existence of the solution for the limit equations \eqref{Euler};}
    \item \textbf{As $\epsilon \to 0$, study the asymptotic behavior of the corresponding solution, if exists, for system \eqref{Euler11}, \eqref{Euler12}, \eqref{Euler13} under the boundary conditions \eqref{bd1}, \eqref{bd2}.}
\end{enumerate}

\begin{rem}
   It should be noted that from subsection \ref{1.1}, one can find a special supersonic solution $\overline{U}_{-}(x)$, subsonic solution $\overline{U}_{+}(x)$ and transonic shock solution $\overline{U}(x)$ for the Euler system \eqref{Euler}. Therefore, problem (ii) in the singular limit problem can be solved and we are devoted to deal with (i) and (iii) in this paper. In fact, the existence result for the singular limit problem (i) can be established as the boundary value problem \eqref{Euler11}, \eqref{Euler12}, \eqref{Euler13} equipped with boundary conditions \eqref{bd1} at the entrance and \eqref{bd2} at the exit is well-posedness. However, the structure features of this heat conductivity solutions and the asymptotic behavior as $\epsilon$ goes to $0$ vary depending on the state at the entrance of the nozzle and the value of the pressure taken at the exit.
\end{rem}
Therefore, we present the results corresponding to the supersonic incoming flow and subsonic incoming flow, respectively. Then, the existence theorem for the singular limit problem (i) as the inflow at the entrance is supersonic can be stated as follows:
\begin{thm}\label{thm1}
    Let 
    \begin{equation}
        U_{in}=U_{0}
    \end{equation}
    be a uniform supersonic state defined in \eqref{U0} and suppose the Mach number $\displaystyle M_{0}:=\sqrt{\frac{\rho_{0}q_{0}^{2}}{\gamma p_{0}}}$  at the entrance satisfies one of the following two conditions:
    \begin{enumerate}
		\item[(S1)] $1<\gamma<3$, and 
			\begin{equation}\label{Mach}
			1<M_{0}^{2}\leq \frac{3\gamma-1}{\gamma(3-\gamma)};
			\end{equation}
		\item[(S2)] $\gamma \geq 3$, and $ M_0>1 $.
    \end{enumerate}
    Set 
    \begin{equation}\label{p*A}
         p_{*}\defs\frac{p_{0}+p_{1}}{2},\quad A\defs \rho_{0}q_{0}^{2}+p_{0}.
    \end{equation}
    Then for any $\epsilon>0$ and 
    \begin{enumerate}
        \item $p_{e}\in (0,p_{0})\cup (p_{0},p_{*})$, there exists a supersonic solution $U=U^{\epsilon}(x)\in C^{2}([0,1])$ to the boundary value problem \eqref{Euler11}, \eqref{Euler12}, \eqref{Euler13}, \eqref{bd1}, \eqref{bd2};
        \item $p_{e}\in (p_{*},p_{1})\cup (p_{1},\frac{A}{2})$, the boundary value problem \eqref{Euler11}, \eqref{Euler12}, \eqref{Euler13}, \eqref{bd1}, \eqref{bd2} admits a transonic solution $U=U^{\epsilon}(x)\in C^{2}([0,1])$. 
    \end{enumerate}
    See Table \ref{tab1} below for a brief list for possible cases.
\end{thm}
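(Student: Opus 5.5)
The plan is to reduce the boundary value problem to a scalar first-order ODE with one free constant, and then fix that constant by a shooting argument. Integrating \eqref{Euler11} and \eqref{Euler12} and using \eqref{bd1} gives
\[
\rho u\equiv m:=\rho_0q_0,\qquad \rho u^2+p\equiv A .
\]
Hence every quantity is a function of $u$ alone:
\[
\rho=\frac{m}{u},\qquad p=A-mu,\qquad e=E(u):=\frac{u(A-mu)}{(\gamma-1)m},\qquad \Phi=\frac{u^2}{2}+E(u).
\]
Integrating \eqref{Euler13} once gives
\[
\epsilon E'(u)\,u_x=m\bigl(\Phi(u)-\Phi_0\bigr)+C,\qquad u(0)=q_0,
\]
where $C=\epsilon e_x(0)$ is not prescribed. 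The exit condition \eqref{bd2} becomes $u(1)=u_e:=(A-p_e)/m$. A solution of this ODE lies in $C^1$, and $C^2$ regularity then follows from the equation, as long as $E'(u)\neq0$ along the trajectory. Since $E'(u)=(A-2mu)/((\gamma-1)m)$, this means $u\neq A/(2m)$, i.e. $p\neq A/2$. This is exactly why the statement requires $p_e<A/2$.

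Next I will analyse the right-hand side. By the Rankine--Hugoniot relations \eqref{R-H}, the quadratic $F(u):=m(\Phi(u)-\Phi_0)$ vanishes at $u=q_0$ and $u=q_1$, so
\[
F(u)=\kappa(u-q_0)(u-q_1),\qquad \kappa=\frac{m(\gamma-3)}{2(\gamma-1)} .
\]
Its vertex is at $u_*=(q_0+q_1)/2$, which corresponds exactly to $p=p_*$. The roles of $p_*$ and $p_1$ are then clear: $p_e=p_0$ and $p_e=p_1$ correspond to $u_e=q_0$ and $u_e=q_1$, the equilibria of $F$. I will use (S1)/(S2) to fix the ordering of the relevant points on the $u$-axis:
\begin{itemize}
\item the singular point $A/(2m)$;
\item the sonic point $u_c=\gamma A/((\gamma+1)m)$, where $M=1$;
\item $q_1$, $u_*$ and $q_0$.
\end{itemize}
The aim is to show that on the interval $u>A/(2m)$ the sign of $E'$ is fixed, and that $u_*$ and $u_c$ sit in such a way that trajectories ending at $u_e>u_*$ stay supersonic, while those ending at $u_e<u_*$ cross $u_c$. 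The bound \eqref{Mach} for $\gamma<3$ is presumably what guarantees $q_1>A/(2m)$, together with the correct placement of $u_*$ relative to the sonic point. Verifying these inequalities is a routine but careful algebraic computation in $M_0^2$ and $\gamma$.

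The shooting step comes next. For each $C$, let $u(x;C)$ be the maximal solution starting from $q_0$. The vector field $(F(u)+C)/(\epsilon E'(u))$ is strictly monotone in $C$ because $E'$ has a fixed sign, so the comparison principle shows that $C\mapsto u(1;C)$ is continuous and strictly monotone on the set $I$ of those $C$ for which the solution exists on $[0,1]$ without reaching $A/(2m)$.
\begin{itemize}
\item At $C=0$ we get $u\equiv q_0$, which is the case $p_e=p_0$; this is excluded in the statement, since it gives the trivial constant solution.
\item Perturbing $C$ to one sign moves $u(1;C)$ monotonically toward $+\infty$, i.e. $p\to0$.
\item Perturbing $C$ to the other sign pushes $u(1;C)$ past $u_*$ and $q_1$, down toward $A/(2m)$.
\end{itemize}
By the intermediate value theorem every $u_e\in(A/(2m),\infty)\setminus\{q_0,q_1,u_*\}$ is attained. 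Unwinding the change of variables, the range $u_e>u_*$ gives part (1) and the range $A/(2m)<u_e<u_*$ gives part (2).

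I expect the main obstacle to be the endpoint analysis of $I$. One has to show that as $C$ tends to the endpoint of $I$ on the lower side, $u(1;C)$ really tends to $A/(2m)$, i.e. $p$ reaches $A/2$, rather than the trajectory stalling at a zero of $F+C$. Near that endpoint $u_x$ blows up because $E'\to0$. I would handle this by using $x$ as a function of $u$:
\[
x(u)=\epsilon\int_{q_0}^{u}\frac{E'(s)}{F(s)+C}\,ds .
\]
Here the total "length" is an explicit integral, continuous and monotone in $C$, and the problem becomes solving $x(u_e;C)=1$. This integral formulation also makes the monotonicity transparent. It gives the bounds needed for the case split: the integral diverges as $C$ approaches the values making $F+C$ vanish at $q_0$ or $u_e$, and it tends to $0$ as $|C|\to\infty$.
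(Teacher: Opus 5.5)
\textbf{Overall.} Your route is the paper's route. You integrate the first two equations, reduce to $\epsilon E'(u)u_x=F(u)+C$, and pass to the inverse $x(u)=\epsilon\int_{q_0}^{u}E'(s)/(F(s)+C)\,ds$. You then solve $x(u_e;C)=1$ using strict monotonicity in $C$, divergence at the critical constant (where $F+C$ vanishes at $q_0$ or at $u_e$), and decay as $|C|\to\infty$. This is the paper's Lemma~2.2 for $Q_\kappa(\alpha)$. The existence part is therefore fine. As a side benefit, building the solution monotone from the integral means you do not need the paper's a~priori Hopf-lemma monotonicity for existence.

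\textbf{The gap: the Bernoulli function and the supersonic/transonic split.} You read $\Phi=\tfrac12u^2+e$ literally from the introduction. The physical energy flux, the relation $\Phi_0=\Phi_1$, and the paper's own reformulation \eqref{re_Phi} all use the enthalpy, $\Phi=\tfrac12u^2+\frac{\gamma}{\gamma-1}\frac{p}{\rho}$. With the enthalpy,
\[
F(u)=-\frac{m(\gamma+1)}{2(\gamma-1)}(u-q_0)(u-q_1),
\]
which is strictly concave for every $\gamma>1$. Your coefficient $\frac{m(\gamma-3)}{2(\gamma-1)}$ should have looked suspicious. At $\gamma=3$, which (S2) allows, your $F$ is linear, so no $q_1\neq q_0$ can exist; for $\gamma>3$ its concavity flips.

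The more serious consequence concerns the vertex. With the correct $F$,
\[
u_*=\frac{q_0+q_1}{2}=\frac{\gamma A}{(\gamma+1)m},
\]
which is exactly the sonic speed $u_c$. Equivalently, $p_*=\frac{A}{\gamma+1}$ is the sonic pressure, as the paper notes after the theorem. This identity, not an inequality to be checked, is what produces the dichotomy. Since $M^2=\frac{mu}{\gamma(A-mu)}$ is increasing in $u$ and the constructed solution is monotone between $q_0$ and $u_e$, it is supersonic throughout if $u_e>u_*$ and crosses $M=1$ if $u_e<u_*$.

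With your $F$ (for $1<\gamma<3$) the vertex sits at $\frac{A}{(3-\gamma)m}$. This differs from $\frac{\gamma A}{(\gamma+1)m}$ for every $\gamma>1$. So the placement you planned to verify (``ending above $u_*$ stays supersonic, ending below crosses $u_c$'') is false for $u_e$ between these two points, and the deferred ``routine algebraic computation'' would fail. Once $\Phi$ is corrected, that step becomes one line.

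\textbf{Smaller points.}
\begin{itemize}
\item Your guess about (S1)/(S2) is right. A short computation shows they are equivalent to $p_1\le A/2$, i.e.\ $q_1\ge A/(2m)$. Meanwhile $p_0<A/2$ holds automatically because $\gamma M_0^2>1$. Together these keep $A-2p>0$ along every trajectory with $p_e<A/2$.
\item $p\to 0$ corresponds to $u\to A/m$, not to $u\to+\infty$; the relevant range is $u_e\in(A/(2m),A/m)$. This slip is harmless, since the shooting gives $u(1;C)\to+\infty$, which covers that range.
\item Your worry about the endpoint of $I$ disappears if you fix $u_e$ first and solve $x(u_e;C)=1$ directly, as the paper does.
\end{itemize}
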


Some comments on Theorem \ref{thm1} are in order.
\begin{rem}
    The two assumptions (S1) and (S2) in Theorem \ref{thm1} are imposed only as sufficient conditions in the proof to avoid degeneracy of the coefficient in the principle part of the reformulated equations when the exit pressure $p_{e}$ belongs to $(p_{*},p_{1})\cup (p_{1},\frac{A}{2})$. However, when $p_{e}\in (0,p_{0})\cup (p_{0},p_{*})$, the above conclusion remains valid in the absence of assumption (S1) and (S2). More details are given in Section 2. Whether the conclusion of Theorem \ref{thm1} remains valid when both (S1) and (S2) fail is an interesting question that requires further investigation.
\end{rem}
\begin{table}[H]
        \centering
        \begin{tabular}{|c|c|c|c|}
        \hline
            $U(0)=U_{0},p(1)=p_{e}$ &type of solution & \thead{location of \\ boundary layer} &\thead{monotonicity \\of pressure} \\
        \hline
            Case1: $p_{e} \in (0,p_{0})$ & supersonic & $x=1$ &  $\searrow$ \\
        \hline
             Case2: $p_{e} \in (p_{0},p_{*})$ & supersonic & $x=1$ & $\nearrow$\\
        \hline
             Case3: $p_{e} \in (p_{*},p_{1})$ & transonic  & $x=1$ & $\nearrow$ \\
        \hline
             Case4: $p_{e} \in (p_{1},\frac{A}{2})$ & transonic  & $x=0$ & \(\nearrow\)\\
        \hline
        \end{tabular}
        \vspace{3mm}
        \caption{The structure of the solution for \eqref{Euler11}, \eqref{Euler12}, \eqref{Euler13}, \eqref{bd}}
        \label{tab1}
\end{table}

\begin{rem}
    The constant $A$ in Theorem \ref{thm1} is uniquely determined by the uniform inflow state in the nozzle, standing for the quantity of the total momentum. Moreover, the critical pressure $p_{*}$ associating with the sonic state which satisfies the relation
    \begin{equation}
        p_{*}=\frac{A}{\gamma+1}.
    \end{equation}
    Therefore, the solution to problem \eqref{Euler11}, \eqref{Euler12}, \eqref{Euler13} with boundary conditions \eqref{bd} is supersonic when $p_{e}\in (0,p_{0})\cup (p_{0},p_{*})$ and is transonic when $p_{e}\in (p_{*},p_{1})\cup (p_{1},\frac{A}{2})$. Also, it is shown that the pressure is monotonic decreasing when $0<p_{e}<p_{0}$ and is monotonic increasing when $p_{0}<p_{e}<\frac{A}{2}$, see Figure \ref{boundary_layer}. In particular, since the case $p_{e}=p_{0}$ is trival and $p_{e}=p_{1}$ has been thoroughly analyzed by the authors in \cite{FangJiangSun2024JDE}, it will not be further considered in this work.     
\end{rem}

Theorem \ref{thm1} establishes the existence of heat-conductive solutions for each fixed $\epsilon>0$. We next describe the formal asymptotic structure of these solutions as $\epsilon \to 0+$. Since the limiting problem is generally overdetermined by the boundary data \eqref{bd}, the solution $\overline{U}_{-}(x)$, $\overline{U}_{+}(x)$ or $\overline{U}(x)$ may not match the endpoint conditions. The mismatch is therefore lead to a thin layer near one endpoint of the nozzle. To describe such layer more clearly, we introduce the following scaling transformation 
\begin{equation}\label{right}
    y_{1}=\frac{1-x}{\epsilon}
\end{equation}
at $x=1$ for $p_{e}\in (0,p_{0})\cup (p_{0},p_{*})\cup (p_{*},p_{1})$ and
\begin{equation}\label{left}
    y_{2}=\frac{x}{\epsilon}
\end{equation}
at $x=0$ for $p_{e}\in (p_{1},\frac{A}{2})$. Set $ U_{R}(y_{1})\defs U(1-\epsilon y_{1})$ for $p_{e}\in (0,p_{0})\cup (p_{0},p_{*})\cup (p_{*},p_{1})$. Substituting this scaling \eqref{right} into system \eqref{Euler11}, \eqref{Euler12}, \eqref{Euler13} gives the formal equations of the right layer.
\begin{align}
    &\partial_{y_{1}}(\rho_{R}u_{R})=0,\\
    &\partial_{y_{1}}(\rho_{R}u_{R}^{2}+p_{R})=0,\\
    &\partial_{y_{1}}(\rho_{R}u_{R}\Phi_{R})=-\partial_{y_{1}y_{1}}e_{R},
\end{align}
with boundary conditions
\begin{align}
    &p_{R}(0)=p_{e},\\
    &\lim_{y_{1}\to +\infty}U_{R}(y_{1})=U_{0},
\end{align}
where $\displaystyle \Phi_{R}=\frac{1}{2}u_{R}^{2}+\frac{\gamma}{\gamma-1}\frac{p_{R}}{\rho_{R}}$. Similarly, set $U_{L}(y_{2})\defs U(\epsilon y_{2})$ for $p_{e}\in (p_{1},\frac{A}{2})$. Substituting this scaling \eqref{left} into system \eqref{Euler11}, \eqref{Euler12}, \eqref{Euler13}  gives the formal left boundary layer equations 
\begin{align}
    &\partial_{y_{2}}(\rho_{L}u_{L})=0,\\
    &\partial_{y_{2}}(\rho_{L}u_{L}^{2}+p_{L})=0,\\
    &\partial_{y_{2}}(\rho_{L}u_{L}\Phi_{L})=\partial_{y_{2}y_{2}}e_{L},
\end{align}
with boundary conditions
\begin{align}
    &U_{L}(0)=U_{0},\\
    &\lim_{y_{2}\to +\infty}p_{L}(y_{2})=p_{e},
\end{align}
where $\displaystyle \Phi_{L}=\frac{1}{2}u_{L}^{2}+\frac{\gamma}{\gamma-1}\frac{p_{L}}{\rho_{L}}$. Define
\begin{align}
    &U_{R}^{b}(y_{1})=U_{R}(y_{1})-U_{0},\label{boundary_layer1}\\
    &U_{L}^{b}(y_{2})=U_{L}(y_{2})-U_{e}\label{boundary_layer2}.
\end{align}
Here, $U_{R}^{b}(y_{1})$ and $U_{L}^{b}(y_{2})$ are used to describe the profile of the right and left layer. 

Then, under the condition of uniform supersonic flow at the entrance, for the singular limit problem (iii), we have

\begin{thm}\label{thm3}
    Let the assumptions in Theorem \ref{thm1} hold and $U^{\epsilon}(x)\in C^{2}([0,1])$ be a solution to equations \eqref{Euler11}, \eqref{Euler12}, \eqref{Euler13} with boundary conditions \eqref{bd}. Then
    \begin{enumerate}
        \item for $p_{e}\in (0,p_{0})\cup (p_{0},p_{*})\cup (p_{*},p_{1})$, the solution converges to the uniform supersonic state $U_{0}$ away from the exit, and a "boundary layer" forms at $x=1$ of the nozzle. Consequently, the solution has the formal expansion
        \begin{equation}
            U^{\epsilon}(x)=U_{0}+U_{R}^{b}(\frac{1-x}{\epsilon})+r_{R}^{\epsilon}(x),
        \end{equation}
        where $\displaystyle U_{R}^{b}(y_{1})$ is the right boundary layer profile defined as \eqref{boundary_layer1} and $r_{R}^{\epsilon}(x)$ is the remainder term of order $\epsilon$.
        \item for $p_{e}\in (p_{1},\frac{A}{2})$, a "boundary layer" forms at $x=0$ of the nozzle. Consequently, the solution has the formal expansion
        \begin{equation}
            U^{\epsilon}(x)=U_{e}+U_{L}^{b}(\frac{x}{\epsilon})+r_{L}^{\epsilon}(x),
        \end{equation}
        where $\displaystyle U_{L}^{b}(y_{2})$ is the left boundary layer profile defined as \eqref{boundary_layer2} and $r_{L}^{\epsilon}(x)$ is the remainder term of order $\epsilon$.
    \end{enumerate}    
    See Figure \ref{boundary_layer} for a schematic plot of the pressure for different cases.
\end{thm}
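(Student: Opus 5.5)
Integrating the first two equations reduces the problem to a scalar ODE. From \eqref{Euler11} and \eqref{Euler12} with the data \eqref{bd1} we have $\rho u\equiv m_0:=\rho_0q_0$ and $\rho u^2+p\equiv A$. Hence $u=(A-p)/m_0$ and $\rho=m_0^2/(A-p)$, and every quantity becomes a function of $p$ alone. In particular
\[
e=\frac{p(A-p)}{(\gamma-1)m_0^2}=:g(p),\qquad \Phi=\frac{(A-p)^2}{2m_0^2}+\frac{\gamma}{\gamma-1}\frac{p(A-p)}{m_0^2}=:F(p).
\]
Integrating \eqref{Euler13} once from $0$ gives
\[
\epsilon\, g'(p)p_x=m_0\bigl(F(p)-F(p_0)\bigr)-\epsilon\, g'(p_0)p_x(0)=:m_0\bigl(F(p)-F(p_0)\bigr)-\epsilon\kappa ,
\]
where $\kappa$ is a free constant fixed by the exit condition $p(1)=p_e$. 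The zeros of $F(p)-F(p_0)$ are exactly $p_0$ and $p_1$, by \eqref{R-H}. The zero of $g'$ is $A/2$, and the sonic point is $p_*=A/(\gamma+1)$. The inner and outer analysis is then read off from this first-order equation $p_x=G_\epsilon(p)$, with $G_\epsilon(p):=\bigl[m_0(F(p)-F(p_0))-\epsilon\kappa\bigr]/(\epsilon g'(p))$, using the solution from Theorem \ref{thm1}, which in this representation corresponds to a choice $\kappa=\kappa(\epsilon)$.

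\textbf{Case (i), layer at $x=1$.} I would first show that $\kappa(\epsilon)\to0$ exponentially in $1/\epsilon$, with the sign of $\kappa$ chosen so that $p$ moves monotonically from $p_0$ toward $p_e$. The outer equation $F(p)=F(p_0)$ together with $p(0)=p_0$ selects the root $p_0$. The point $p_0$ is a rest point of the reduced equation $\epsilon g'(p)p_x=m_0(F(p)-F(p_0))$, and it is repelling in the direction of increasing $x$. Indeed, near $p_0$ we have $F(p)-F(p_0)\approx F'(p_0)(p-p_0)$, and $F'(p_0)/g'(p_0)$ has a definite sign that I would compute from the supersonic condition $M_0>1$, equivalently $p_0<p_*$. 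Consequently the solution stays exponentially close to $p_0$ on $[0,1-\delta]$ for any fixed $\delta>0$, which gives convergence to $U_0$ away from the exit. In the variable $y_1=(1-x)/\epsilon$, and with $\kappa\to0$, the limiting equation is the autonomous ODE
\[
g'(p_R)\,\partial_{y_1}p_R=-m_0\bigl(F(p_R)-F(p_0)\bigr),\qquad p_R(0)=p_e .
\]
This is exactly the integrated form of the right layer equations stated before the theorem. Its solution tends to $p_0$ as $y_1\to\infty$, provided $[p_0,p_e]$ (or $[p_e,p_0]$) contains no other zero of $F-F(p_0)$ and no zero of $g'$; this holds because $p_e<p_1$ and $p_e<A/2$. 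The convergence is exponential.

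\textbf{Case (ii), layer at $x=0$.} Here $p_1<p_e<A/2$. The outer state is $U_e$, the constant state with $p=p_e$ and the same $m_0$ and $A$. For this to be consistent, $\kappa$ must tend to $\kappa_0:=m_0(F(p_e)-F(p_0))\neq0$, so that $p_e$ becomes a nearly attracting equilibrium of $G_\epsilon$. Rescaling by $y_2=x/\epsilon$, the left layer solves
\[
g'(p_L)\,\partial_{y_2}p_L=m_0\bigl(F(p_L)-F(p_e)\bigr),\qquad p_L(0)=p_0 .
\]
It leaves $p_0$, crosses $p_1$ (which is not an equilibrium once $\kappa\ne0$), and converges exponentially to $p_e$. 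For this I need $F(p)-F(p_e)$ to have no zero in $(p_0,p_e)$ and $g'>0$ on that interval. The first property should follow from the monotonicity of $F$ on the relevant range; conditions (S1)/(S2) are used to rule out degeneracy, as in the proof of Theorem \ref{thm1}.

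The remainder estimate is the same in both cases. Define $r^\epsilon=U^\epsilon-U_{\rm outer}-U^b$. The outer part is exact, because constants solve the reduced equation up to the $\kappa$ perturbation. The mismatch of the layer profile at the far endpoint is exponentially small. The difference between the $\kappa(\epsilon)$-equation and the limit equation is controlled by $|\kappa(\epsilon)-\kappa_{\lim}|$, which I expect to be $O(\epsilon)$ via a Gronwall argument on the first-order ODE. The main obstacle is precisely this quantitative control of $\kappa(\epsilon)$, that is, the shooting parameter. I would obtain it by writing $1=\int_{p_0}^{p_e}dp/G_\epsilon(p)$ and analysing the integral as $\epsilon\to0$: the near-singularity at the equilibrium produces the logarithmic or exponential relation between $\kappa$ and $\epsilon$. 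The uniform nonvanishing of $g'$ and of $F'$ at the relevant equilibria, guaranteed by (S1)/(S2), is what makes this expansion valid.
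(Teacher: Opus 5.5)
Your proposal is correct and follows essentially the paper's route. You use the same reduction to a scalar first-order equation whose integration constant is fixed by $\int_{p_0}^{p_e}\kappa(A-2p)/(f(p)+\alpha)\,dp=1$, and you get the same limits $\alpha_\kappa\to0$ in case (i) and $\alpha_\kappa\to-f(p_e)$ in case (ii), forced by the logarithmic singularity at $p_0$ resp.\ $p_e$. Your repelling/attracting-equilibrium reasoning is the phase-line reading of the paper's $I_\kappa(p)=X_\kappa/(1-X_\kappa)\to+\infty$ resp.\ $0$, and your explicit layer ODEs and remainder discussion go beyond the paper, which leaves the expansion formal.

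Some fixes are needed. Integrating \eqref{Euler13} from $0$ gives $+\epsilon g'(p_0)p_x(0)$, not $-$. In case (ii) it is your full constant $\epsilon\kappa$, not $\kappa$, that tends to $m_0(F(p_e)-F(p_0))$. Positivity of $F-F(p_e)$ on $[p_0,p_e)$ comes from concavity of $F$ together with $F(p_0)=F(p_1)>F(p_e)$, not from monotonicity, since $p_*\in(p_0,p_e)$. For the remainder, a plain Lipschitz Gronwall bound over $y\in[0,1/\epsilon]$ loses a factor $e^{L/\epsilon}$, which swamps the exponentially small mismatch in the constant. Instead use the linear stability of the limiting equilibrium, or compare the inverse functions $X_\kappa$ directly; this yields a remainder that is in fact exponentially small.
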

\begin{figure}[htbp]
	\centering
	\includegraphics[width=9cm,height=6cm]{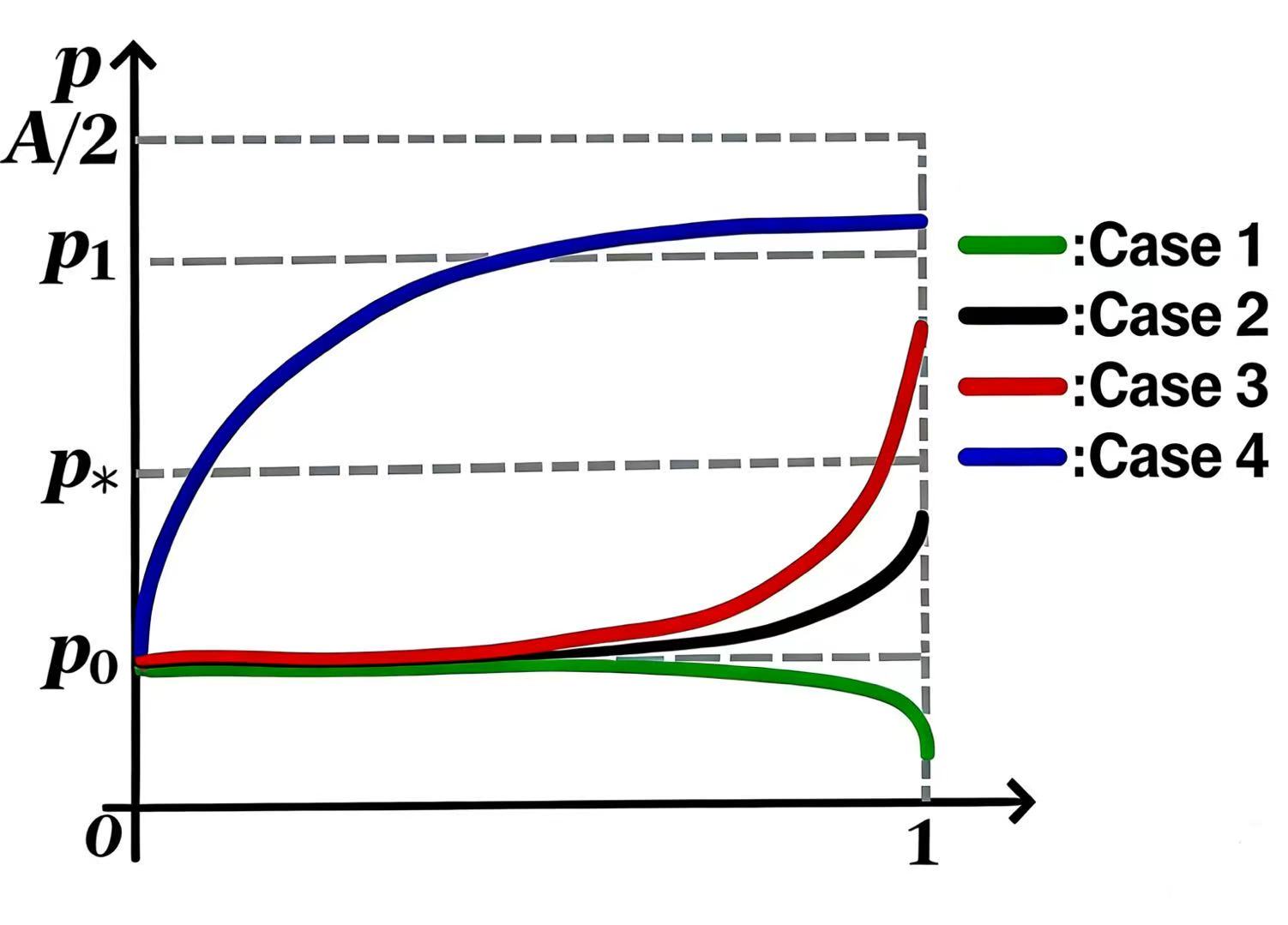}
	\caption{The pressure $p^{\epsilon}(x)$ for $\epsilon>0$ and their limit as $\epsilon \to 0+$.}\label{boundary_layer}
\end{figure}

\begin{rem}
    The remainder terms $r_{R}^{\epsilon}(x)$, $r_{L}^{\epsilon}(x)$ in Theorem \ref{thm3} as well as $r(x)$ in Theorem \ref{thm4} satisfy that for some constant $O(1)$,
    \begin{equation}
        r_{R}^{\epsilon}(x)=O(1)\epsilon,\hspace{1.5mm}r_{L}^{\epsilon}(x)=O(1)\epsilon,\hspace{1.5mm}r(x)=O(1)\epsilon.
    \end{equation}
    Explicit formulas for $r_{R}^{\epsilon}(x)$, $r_{L}^{\epsilon}(x)$ and $r(x)$ require further detailed computations.
\end{rem}
\begin{rem}
    Theorem \ref{thm3} reveals that the solution for Euler system with heat conduction can not converge (even in the weak sense) to either the  supersonic solution or the transonic shock solution of the ideal Euler system. This phenomenon is due to the incompatibility between the exit boundary conditions and the Euler solution, rendering the boundary layer nonvanishing in the limit. Moreover, the location of the boundary layer is governed by the state at the entrance and the exit pressure $p_{e}$, see Table \ref{tab1} and Figure \ref{boundary_layer}.
\end{rem}
\begin{rem}
    The boundary layer in Theorem \ref{thm3} differs fundamentally from the interior shock front, which connects the supersonic state and subsonic state by the Rankine-Hugoniot conditions, see \cite{FangJiangSun2024JDE}. Instead, this layer studied herein is a boundary-induced singular layer rather than an interior discontinuity spontaneously generated within the flow. This two phenomena reflect two distinct mechanisms in the same vanishing heat-conductivity limit: the interior shock front selects an admissible interior shock under compatible shock data, whereas the boundary-induced singular layers indicate that a singular layer forms at the endpoint which leads to the failure of the convergence with general boundary conditions.
\end{rem}
So far, the preceding results focus on the case where the prescribed incoming state is the uniform supersonic state $U_{0}$. It is then natural to ask whether the solution exists and what is the asymptotic behavior as the coefficient of the heat conductivity goes to $0$ when the boundary data at the entrance of the nozzle are instead by the corresponding uniform subsonic state $U_{1}$. In fact, the change in the incoming flow leads to a different boundary-layer structure, as stated in the following theorems. We first give the existence result for the singular limit problem (i) as the incoming flow is subsonic:
\begin{thm}\label{thm2}
    Given
    \begin{equation}
        U_{in}=U_{1}
    \end{equation}
    at the entrance,
	where $U_{1}$ is a uniform subsonic state defined in \eqref{U1}. Then under assumptions $(S1)$ or $(S2)$ in Theorem \ref{thm1}, 
    for any $\epsilon>0$, there exists a solution $U=U^{\epsilon}(x)\in C^{2}([0,1])$ to the system \eqref{Euler11}, \eqref{Euler12}, \eqref{Euler13} with boundary conditions \eqref{bd1}, \eqref{bd2}. More precisely,
    \begin{itemize}
        \item if $p_{e}\in (0,p_{*})$, then the solution $U^{\epsilon}(x)$ is transonic (subsonic-supersonic);
        \item if $p_{e}\in (p_{*},p_{1})\cup (p_{1},\frac{A}{2})$, then the solution $U^{\epsilon}(x)$ is subsonic.
    \end{itemize}
    See Table \ref{tab2} and Figure \ref{singular_layer} for more information.
\end{thm}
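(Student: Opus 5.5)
We reduce the system to a single scalar ODE and then solve the two-point problem by shooting on one free constant. Integrating \eqref{Euler11} and \eqref{Euler12} from $x=0$ and using $U(0)=U_{1}$, we get $\rho u\equiv m\defs\rho_{1}q_{1}=\rho_{0}q_{0}$ and $\rho u^{2}+p\equiv A$, where $A$ is the same constant as in \eqref{p*A} because of \eqref{R-H}. Hence $u=(A-p)/m$ and $\rho=m^{2}/(A-p)$, and $e$ and $\Phi$ become explicit functions of $p$:
\begin{equation*}
e=E(p)\defs\frac{p(A-p)}{(\gamma-1)m^{2}},\qquad \Phi=\frac{(A-p)^{2}}{2m^{2}}+\frac{\gamma\,p(A-p)}{(\gamma-1)m^{2}}.
\end{equation*}
Since $E'(p)=(A-2p)/((\gamma-1)m^{2})>0$ on $(0,\frac{A}{2})$, $e$ is an admissible unknown there, with inverse $p=P(e)$. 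This explains the upper bound $\frac{A}{2}$ on $p_{e}$. Integrating \eqref{Euler13} once gives
\begin{equation*}
\epsilon e'(x)=F(e)-B,\qquad F(e)\defs m\,\Phi(P(e)),\qquad e(0)=E(p_{1}),
\end{equation*}
where $B$ is an unknown constant (equivalently, the unknown flux $\epsilon e'(0)$). The exit condition becomes $e(1)=E(p_{e})$.

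As a function of $p$, $\Phi$ is a quadratic whose vertex is at the sonic pressure $p_{*}=A/(\gamma+1)=(p_{0}+p_{1})/2$. This is consistent with $\Phi_{0}=\Phi_{1}$. So $F-B$ has at most two zeros, lying symmetrically about $p_{*}$ in the $p$-variable. The ODE is autonomous and scalar, so each solution $e(\cdot;B)$ is strictly monotone (or constant) for as long as it stays in $(0,E(\frac{A}{2}))$. We use hypotheses (S1)/(S2) exactly as in the proof of Theorem \ref{thm1}, namely to ensure that $F$ is $C^{1}$ and nondegenerate on the relevant range of $e$ (in particular near $p_{*}$), so that solutions can cross the sonic state.

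By the comparison principle, $B\mapsto e(x;B)$ is continuous and strictly decreasing wherever it is defined. Let $\mathcal{I}$ be the open set of $B$ for which $e(\cdot;B)$ exists on $[0,1]$ with values in $(0,E(\frac{A}{2}))$.
\begin{itemize}
\item For $B=m\Phi_{1}$ we get the constant solution $p\equiv p_{1}$.
\item As $B$ increases, $e$ decreases, the pressure drops below $p_{1}$, and $p(1)$ approaches $0$ at the endpoint of $\mathcal{I}$.
\item As $B$ decreases, $p(1)$ rises toward $\frac{A}{2}$.
\end{itemize}
The intermediate value theorem, applied to the continuous monotone map $B\mapsto P(e(1;B))$, then gives a unique $B$ with $p(1)=p_{e}$ for every $p_{e}\in(0,\frac{A}{2})$. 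Regularity $C^{2}([0,1])$ (indeed $C^{\infty}$) follows from smoothness of $F$ and $P$ on the range of the solution.

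The classification follows from monotonicity of $p$. If $p_{e}\in(p_{*},p_{1})\cup(p_{1},\frac{A}{2})$, then $p$ stays strictly on the subsonic side $p>p_{*}$, so the solution is subsonic. If $p_{e}\in(0,p_{*})$, then $p$ decreases from $p_{1}>p_{*}$ to $p_{e}<p_{*}$ and crosses the sonic value exactly once, giving a subsonic–supersonic transonic solution.

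The main obstacle is the range statement for the shooting map. We must show that the endpoints of $\mathcal{I}$ really push $p(1)$ all the way to $0$ and to $\frac{A}{2}$, with no loss of existence in between. Near $p=\frac{A}{2}$ the coefficient $E'(p)$ degenerates, and near $p_{*}$ the right-hand side $F(e)-B$ can vanish, which would trap solutions. To handle this we would work directly in the $p$-variable,
\begin{equation*}
\epsilon E'(p)\,p'=m\Phi(p)-B,
\end{equation*}
and first choose $B$ beyond the extremum of $m\Phi$, where $m\Phi(p)-B$ has a fixed sign on the whole range, so that $p$ moves monotonically with speed bounded below. We would then use explicit comparison with such $B$ to show that $p$ reaches any target before $x=1$. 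This is the step where (S1)/(S2) is expected to be needed.
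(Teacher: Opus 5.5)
Your argument is correct and is essentially the paper's argument in different packaging. With $\rho u\equiv1$, your constant is $B=\Phi(p_{0})-\alpha_{\kappa}$. After the same first integration, the paper gets monotonicity of $p^{\kappa}$ from the maximum principle and the Hopf lemma, inverts $p^{\kappa}$, and fixes $\alpha_{\kappa}$ from $\int_{p_{1}}^{p_{e}}\kappa(A-2s)/F(s;\alpha)\,ds=1$, using monotonicity of this integral in $\alpha$ and its limits at the ends of the admissible range. That integral is exactly the travel time of your autonomous ODE from $p_{1}$ to $p_{e}$, so your shooting condition $p(1;B)=p_{e}$ is the same equation.

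Your comparison/shooting formulation makes monotonicity automatic (a scalar autonomous ODE, so no Hopf lemma) and gives uniqueness of $B$ cheaply. The paper's explicit quadrature is what later controls $\alpha_{\kappa}$ as $\kappa\to0$ for Theorem \ref{thm4}. The range issue you call the main obstacle is milder than you fear:
\begin{itemize}
\item $\mathcal{I}$ is an interval by comparison.
\item For $|B|$ large the trajectory leaves $(0,\frac{A}{2})$ before $x=1$ with speed bounded away from zero. Hence $\mathcal{I}$ is bounded, with $p(1;B)\to0$ at one end and $p(1;B)\to\frac{A}{2}$ at the other.
\item Trapping at an equilibrium, which occurs for $m\Phi(\frac{A}{2})<B\le m\Phi(p_{*})$, is absorbed by continuity and monotonicity of $B\mapsto p(1;B)$.
\end{itemize}

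What must be corrected is your account of (S1)/(S2). They have nothing to do with the sonic state. Since $p_{*}=\frac{A}{\gamma+1}<\frac{A}{2}$ for every $\gamma>1$, $F$ is smooth near $p_{*}$ (where $dF/de=0$ unavoidably), and whether a trajectory crosses $p_{*}$ is decided solely by whether $B>m\Phi(p_{*})$.

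Their real role, also in the paper where they make the second-order form elliptic ($A-2p>0$ on the range of $p$), is to put the entrance state in your admissible set. With $A=p_{0}(1+\gamma M_{0}^{2})$ and $p_{1}=2p_{*}-p_{0}$, one finds $p_{1}\le\frac{A}{2}$ if and only if $(3-\gamma)(1+\gamma M_{0}^{2})\le 2(\gamma+1)$. This is automatic for $\gamma\ge3$ and is exactly (S1) for $1<\gamma<3$. Your proof uses $e(0)=E(p_{1})$ with $E'(p_{1})>0$, and the non-emptiness of $(p_{1},\frac{A}{2})$, without checking either. This is where the hypothesis must be invoked.

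You also need $p_{1}<\frac{A}{2}$ strictly. In the equality case of (S1), $E'(p_{1})=0$, and a solution leaving $p_{1}$ downward must have $e'(0)<0$, hence $\partial_{x}p(0)=-\infty$. So the $C^{2}([0,1])$ conclusion genuinely requires strict inequality in (S1).
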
  
\begin{table}[H]
		\centering
		\begin{tabular}{|c|c|c|c|}
			\hline
			$U(0)=U_{1},p(1)=p_{e}$ &type of solution & \thead{location of \\ boundary layer} &\thead{monotonicity \\of pressure} \\
			\hline
			Case5: $p_{e} \in (0,p_{*})$ & sub-sup & $x=0,1$ &  $\searrow$\\
			\hline
			Case6: $p_{e} \in (p_{*},p_{1})$ & subsonic & $x=1$ &  $\searrow$ \\
			\hline
			Case7: $p_{e} \in (p_{1},\frac{A}{2})$ & subsonic & $x=1$ &  $\nearrow$\\
			\hline
		\end{tabular}
		\vspace{3mm}
		\caption{The structure of the solution for \eqref{Euler11}, \eqref{Euler12}, \eqref{Euler13},\eqref{bd}}
		\label{tab2}
	\end{table}

\begin{rem}
    Unlike Theorem \ref{thm1}, which imposes a uniform supersonic inflow at the entrance of the nozzle, Theorem \ref{thm2} is devoted to the case where the boundary value at $x=0$ is prescribed with a uniform subsonic incoming flow. Moreover, it can be observed that the pressure is monotonic decreasing when $p_{e}\in (0,p_{*})\cup (p_{*},p_{1})$ and is monotonic increasing when $p_{e}\in (p_{1},\frac{A}{2})$, see Table \ref{tab2}. The assumptions (S1) and (S2) are still required for technical purposes in the proof of the existence for the solution, whether the conclusions remain valid when neither (S1) nor (S2) holds requires further verification.
\end{rem}

The next result describes the singular behavior of these solutions as the heat conductivity coefficient vanishes for the singular limit problem (iii). 
\begin{thm}\label{thm4}
    Let $U^{\epsilon}(x)$ be the solution obtained in Theorem \ref{thm2}. Then as $\epsilon \to 0+$, the following boundary-layer structures occur.
    \begin{itemize}
        \item for $p_{e}\in (0,p_{*})$, the solution $U^{\epsilon}(x)$ converges away from both endpoints to $U_{*}\defs (u_{*},\rho_{*},p_{*})$, where $p_{*}$ is defined as \eqref{p*A}. Moreover, boundary layers form simultaneously near $x=0$ and $x=1$, so the solution has the formal expansion
        \begin{equation}
            U^{\epsilon}(x)=U_{*}+U_{L}^{b}(\frac{x}{\epsilon})+U_{R}^{b}(\frac{1-x}{\epsilon})+r^{\epsilon}(x),
        \end{equation}
        where $U_{R}^{b}(x)$, $U_{L}^{b}(x)$ are defined as \eqref{boundary_layer1}, \eqref{boundary_layer2} and $r^{\epsilon}(x)$ is the remainder term of order $\epsilon$.        
        \item for $p_{e}\in (p_{*},p_{1})\cup (p_{1},\frac{A}{2})$, the solution converges away from the exit to the uniform subsonic state $U_{1}$. A boundary layer forms near $x=1$. Formally,
        \begin{equation}
            U^{\epsilon}(x)=U_{1}+U_{R}^{b}(\frac{1-x}{\epsilon})+r_{R}^{\epsilon}(x),
        \end{equation}
        where $U_{R}^{b}(x)$ is defined as \eqref{boundary_layer1} and $r_{R}^{\epsilon}(x)$ is the remainder term of order $\epsilon$.
    \end{itemize}
\end{thm}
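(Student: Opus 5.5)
The first step is to reduce the system to a scalar second-order ODE for the pressure, as in the proofs of Theorems \ref{thm1} and \ref{thm2}. Integrating \eqref{Euler11} and \eqref{Euler12} from $x=0$ with $U(0)=U_{1}$ gives $\rho u\equiv m:=\rho_{1}q_{1}=\rho_0 q_0$ and $\rho u^{2}+p\equiv A$. Hence $u=(A-p)/m$ and $\rho=m^{2}/(A-p)$, so $e=\frac{1}{\gamma-1}\frac{p(A-p)}{m^{2}}$. Integrating \eqref{Euler13} once then yields
\begin{equation*}
\epsilon\, e'(x)=m\bigl(\Phi(p(x))-\Phi_{1}\bigr)+\epsilon e'(0)=:mF(p)+\epsilon e'(0),
\end{equation*}
where $F(p)=\frac{(A-p)^{2}}{2m^{2}}+\frac{\gamma}{\gamma-1}\frac{p(A-p)}{m^{2}}-\Phi_{1}$ is a quadratic in $p$. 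By \eqref{R-H}, $F$ vanishes exactly at $p_{0}$ and $p_{1}$, and is positive between them. Since $e=e(p)$ has $e'(p)=\frac{A-2p}{(\gamma-1)m^{2}}>0$ for $p<A/2$, the problem reduces to a first-order autonomous ODE $\epsilon\,\frac{A-2p}{(\gamma-1)m^2}p'=mF(p)+\epsilon\kappa$. Here $\kappa=e'(0)$ is an unknown shooting constant fixed by $p(1)=p_{e}$.

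The asymptotics are then read off from the phase line of this ODE. For Case 7, $p_{e}\in(p_{1},A/2)$: choose $\kappa=\kappa_\epsilon$ so that the trajectory starting at $p_{1}$ reaches $p_{e}$ at $x=1$. The equilibrium $p_1$ is repelling forward in $x$ since $F'(p_1)>0$, so $\kappa_\epsilon$ must be exponentially small. The trajectory stays within $O(\epsilon)$ of $p_{1}$ except on a region of width $O(\epsilon)$ near $x=1$. Rescaling $y_{1}=(1-x)/\epsilon$ gives exactly the right layer problem, whose solution is the heteroclinic-type orbit $\epsilon$-independent profile $p_{R}(y_1)$ of $\frac{A-2p}{(\gamma-1)m^2}\,dp/dy_1=-mF(p)$ with $p_R(0)=p_e$ and $p_R\to p_1$ as $y_1\to\infty$. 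This gives the second bullet with $U_{1}$ in place of $U_{0}$, with exponential convergence of the profile. Case 6, $p_{e}\in(p_{*},p_{1})$, is treated the same way on the side $p<p_{1}$, where $F<0$ and $p$ decreases.

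Case 5, $p_{e}\in(0,p_{*})$, needs more care. I would not treat $p_{*}$ as a rest point of the reduced ODE; it enters as the sonic point $A-2p\ldots$. Indeed, in the variable $e$ the coefficient $de/dp$ corresponds, via $\rho u^{2}=\gamma p$, to $p=A/(\gamma+1)=p_{*}$, and the transonic solution from Theorem \ref{thm2} passes through the sonic state. My plan is to write the ODE in the $u$-variable, where the principal coefficient $c^{2}-u^{2}$ degenerates at $U_{*}$. I would then show that for fixed $\kappa_\epsilon$ the solution is trapped near $p_{*}$ on a plateau: the vector field is $O(\epsilon)$-close to a degenerate balance there, forcing $p^{\epsilon}(x)\to p_{*}$ on compact subsets of $(0,1)$. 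Two layers result, a left layer (variable $y_{2}$) connecting $U_{1}$ to $U_{*}$ and a right layer connecting $U_{*}$ to $p_{e}$. The expansion is then obtained by matched asymptotics: the outer solution is $U_{*}$, the inner solutions are the layer equations with matching conditions at infinity, and the composite expansion is outer plus inner minus common limits. The remainder $r^\epsilon=O(\epsilon)$ is estimated by comparing the exact solution with the composite one through the first-order ODE, using a Gronwall or comparison argument.

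I expect the main obstacle to be Case 5. There one must justify that the outer state is precisely the sonic state $U_{*}$ rather than $U_0$ or $U_1$, and that the left layer profile exists; this requires a careful analysis of the degenerate coefficient near $p_{*}$ using (S1)/(S2). I would first try to check monotonicity of $p^\epsilon$ and construct explicit sub- and supersolutions near $p_{*}$.
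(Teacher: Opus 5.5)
\textbf{Cases 6 and 7.} Your argument breaks at the sign of $F'(p_1)$. As you note, $F$ is a concave quadratic that vanishes at $p_0<p_1$ and is positive in between. Its maximum is therefore at $p_*=(p_0+p_1)/2$, so $F'(p_1)<0$ and $p_1$ is \emph{attracting} forward in $x$. Likewise $F>0$, not $<0$, on $(p_*,p_1)$.

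Consequently the shooting constant is not exponentially small. At the first point where $p^\epsilon$ reaches $p_e$, the quantity $mF(p_e)+\epsilon\kappa_\epsilon$ must have the sign of $p_e-p_1$ (or vanish). Since $F(p_e)$ has the opposite sign, $|\epsilon\kappa_\epsilon|\ge m|F(p_e)|>0$. In fact $\epsilon\kappa_\epsilon\to-mF(p_e)$, which is the paper's \eqref{lim_alpha2}.

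The right-layer problem you write down also has no solution. For $p_R$ strictly between $p_1$ and $p_e$, the term $-mF(p_R)$ pushes $p_R$ away from $p_1$ as $y_1$ grows, i.e.\ $p_1$ is repelling in $y_1$.

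The correct phase-line picture is the same as in Case 4. The trajectory leaves $p_1$ in an $O(\epsilon)$ layer at $x=0$, then creeps toward the shifted stable equilibrium, which is exponentially close to $p_e$. So the outer state is $U_e$ and the layer sits at the \emph{entrance}.

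This is a defect of the second bullet itself, not only of your argument. In the paper's proof of Lemma~\ref{lem29}(2), $\int_{p_1}^{p}\frac{A-2s}{F(s;\alpha_\kappa)}ds$ stays bounded while $\int_p^{p_e}\frac{A-2s}{F(s;\alpha_\kappa)}ds\to+\infty$. That gives $I_\kappa(p)\to0$, i.e.\ $X_\kappa(p)\to0$, not the stated $+\infty$. So the second bullet as written cannot be proved; your proposal reaches it only through the sign error.

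\textbf{Case 5.} Your conclusion (a plateau at $p_*$ with one layer at each end) matches the paper, but the mechanism is misidentified and the key step is missing.

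The principal coefficient does not degenerate at the sonic state. In the $p$-variable, $de/dp\propto A-2p$ vanishes at $p=A/2$. In the $u$-variable, $de/du=\rho(c^2/\gamma-u^2)/((\gamma-1)m)$, which vanishes at $M^2=1/\gamma$; this is the degeneracy that (S1)/(S2) keep away. The factor $c^2-u^2$ appears instead in $d\Phi/du=\rho(c^2-u^2)/((\gamma-1)m)$. Thus $p_*$ is the critical point of $F$, and it \emph{is} a rest point in the limit: $mF(p)+\epsilon\kappa_\epsilon$ acquires a double zero there, a saddle-node bottleneck.

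The paper uses $\alpha_\kappa\to-f(p_*)$ (\eqref{lim_alpha1}, only sketched) and integrates $\int\frac{A-2s}{\nu(s-p_*)^2+f(p_*)+\alpha_\kappa}ds$ explicitly via arctan and log terms. This shows that the whole $x$-length is spent near $p_*$: $X_\kappa(p)\to1$ for $p_e<p<p_*$ and $X_\kappa(p)\to0$ for $p_*<p<p_1$.

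In your variables the missing step goes as follows:
\begin{itemize}
\item Since $p^\epsilon$ sweeps through $p_*$, you have $mF(p_*)+\epsilon\kappa_\epsilon<0$.
\item The normalization $1=\epsilon\int_{p_e}^{p_1}\frac{de/dp}{|mF(p)+\epsilon\kappa_\epsilon|}dp$ then forces $mF(p_*)+\epsilon\kappa_\epsilon=-O(\epsilon^2)$.
\item After that, the time spent outside any neighborhood of $p_*$ is $O(\epsilon)$, and no sub- or supersolutions are needed.
\end{itemize}

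Because the rest point is double, convergence is only algebraic. One has $p^\epsilon-p_*=O(\epsilon)$ on compact subsets, and the layer tails decay like $\epsilon/x$ and $\epsilon/(1-x)$. Your matched-asymptotics and Gronwall remainder step must therefore handle non-exponential tails.
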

\begin{figure}[htbp]
	\centering
	\includegraphics[width=8.5cm,height=5.5cm]{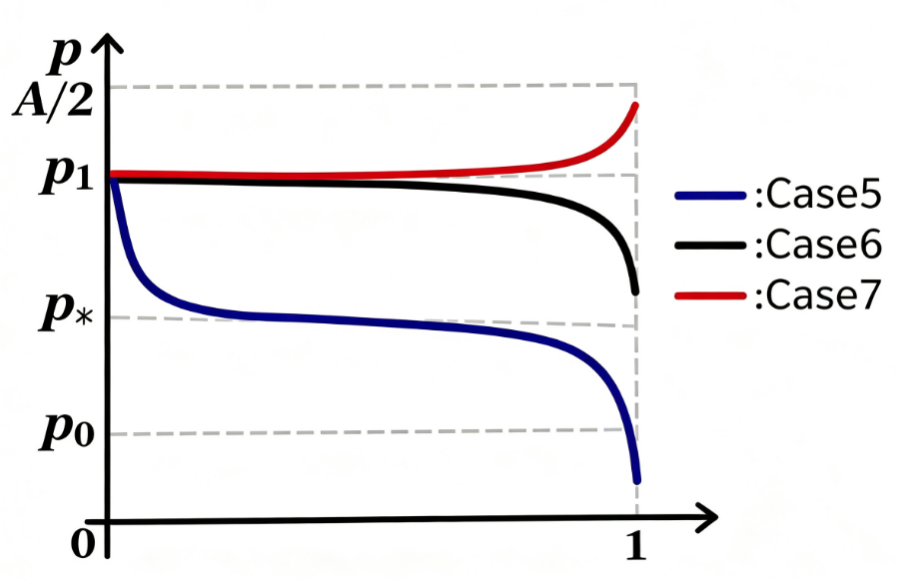}
	\caption{The pressure $p^{\epsilon}(x)$ for $\epsilon>0$ and their limit as $\epsilon \to 0+$.}\label{singular_layer}
\end{figure}
\begin{rem}
 Analogously to Theorem \ref{thm3}, the heat conduction solutions $U^{\epsilon}(x)$ fail to converge (even in the weak sense) to the corresponding subsonic solution or transonic shock solutions of inviscid Euler equations as $\epsilon \to 0$. However, dual boundary layers simultaneously form at both endpoints $x=0$ and $x=1$ for the subsonic-supersonic solution when $p_{e}\in (0,p_{*})$ and the boundary layer localizes solely at $x=1$ regardless of whether $p_{e}$ lies above or below the threshold $p_{1}$ for $p_{e}\in (p_{*},\frac{A}{2})$, which is different with the cases in Figure \ref{singular_layer}. The reasons for this phenomenon remain to be further investigated.
\end{rem}

Next, we briefly summarize the existing research on pure supersonic solution, pure subsonic solution and transonic shock solution for inviscid flows and flows with physical parameters, as well as the vanishing viscosity limit to the inviscid compressible flow.
For the ideal fluid, where the friction, heat-transfer, heat conductivity and other physical effects are neglected, the study of supersonic solution, subsonic solution and transonic shock solutions in a nozzle has yielded extensive results, see \cite{EmbidGoodmanMajda1984SIAM}, \cite{FangXin2021CPAM}, \cite{Liu1982CMP}, \cite{CourantFriedrichs1948},  \cite{Liu1982ARMA}, \cite{WangXin2015SIAM}, \cite{WangXin2019SIAM}, \cite{XinYin2005CPAM} and references therein. However, the physical parameters that are neglected in the flows may play an important role when considering the existence of the corresponding solutions. Therefore, we pay attention to the non-ideal Euler system and investigate the existence of solution and asymptotic behavior of the solutions when certain parameters tend to zero. Indeed, for a scalar conservation law, the existence, uniqueness, and global stability of vanishing viscosity solutions were established by Oleinik \cite{oleinik1963AMS} in one space dimension.  See also \cite{Bianchini2005AM, FangZhao2021CPAA, Gilbarg1951AJM, GoodmanXin1992ARMA, Hoff1989India, Liu1985, Yu1999ARMA} and the references therein on the inviscid limit to the ideal Euler equations.
By considering physical parameters, heat-conductivity, with shock boundary data at the entrance and at the exit of the nozzle in \cite{FangJiangSun2024JDE}, it is verified that for the polytropic gases the heat conductivity shock solution exists and converges to a normal shock solution for the Euler equations. It is worth to mention that we extend the result here in \cite{FangJiangSun2024JDE} to a general boundary data when considering heat-conductivity. See also \cite{gues2004,gues2002} for related studies in multi-dimensional spaces. 

This paper is organized as follows. Section \ref{section1} presents the mathematical formulation of the singular limit problem for fluids with heat conductivity and the main results. Specifically, Theorem \ref{thm1} and Theorem \ref{thm2} are devoted to the existence of heat-conductive transonic shock solutions under general boundary conditions, while Theorem \ref{thm3} and Theorem \ref{thm4} characterize the asymptotic behavior of these shock solutions as the heat-conductivity parameter tends to zero.
Section \ref{section2} is dedicated to the detailed proofs of the main results. In Subsection \ref{section2.1}, the original governing equations \eqref{Euler11}-\eqref{Euler13} are reformulated into a more tractable equivalent form. Then the detailed proofs of Theorem \ref{thm1} and Theorem \ref{thm3} for the case of uniform supersonic incoming flow are provided in Subsection \ref{section2.2}, followed by the proofs of Theorem \ref{thm2} and Theorem \ref{thm4} for the uniform subsonic incoming flow in Subsection \ref{section2.3}.

\section{Reformulation for problem (SLP) and proof of the main theorems}\label{section2}
\subsection{Reformulation for the equations \eqref{Euler11}, \eqref{Euler12}, \eqref{Euler13}}\label{section2.1}
The fluids with heat conduction investigated in this paper can be stated as:
\begin{align}
	&\partial_{x}(\rho u) =0,\label{heat_Euler1} \\
	& \partial_{x}(\rho u^{2}+p) =0, \label{heat_Euler2}\\
	& \partial_{x}(\rho u \Phi) =\epsilon e_{xx},\label{heat_Euler3}
\end{align}
where $\epsilon$ is the coefficient of heat conductivity.

By employing equation $\eqref{heat_Euler1}$ and $\eqref{heat_Euler2}$, one can assume
\begin{align}
    &\rho u\equiv \rho_{0}q_{0}=1,\label{equiv1}\\
    &\rho u^{2}+p=\rho_{0}q_{0}^{2}+p_{0}\equiv A
\end{align}
Therefore, the Bernoulli function $\Phi$ can be written as
\begin{equation}\label{re_Phi}
	\Phi=\frac{1}{2}u^{2}+\frac{\gamma}{\gamma-1}\frac{p}{\rho}=\frac{1}{2}(A-p)^{2}+\frac{\gamma}{\gamma-1}p(A-p)\defs\Phi(p).
\end{equation}

Set $f(p)=\Phi(p)-\Phi(p_{0})$, obviously, $f(p)$ is a smooth function with respect to $p$ defined in $(0,+\infty)$ and it is strictly concave, namely $f''(p)<0$, see Figure \ref{Bernoulli}. 
\begin{figure}[htbp]
	\centering	\includegraphics[width=9.5cm,height=5.3cm]{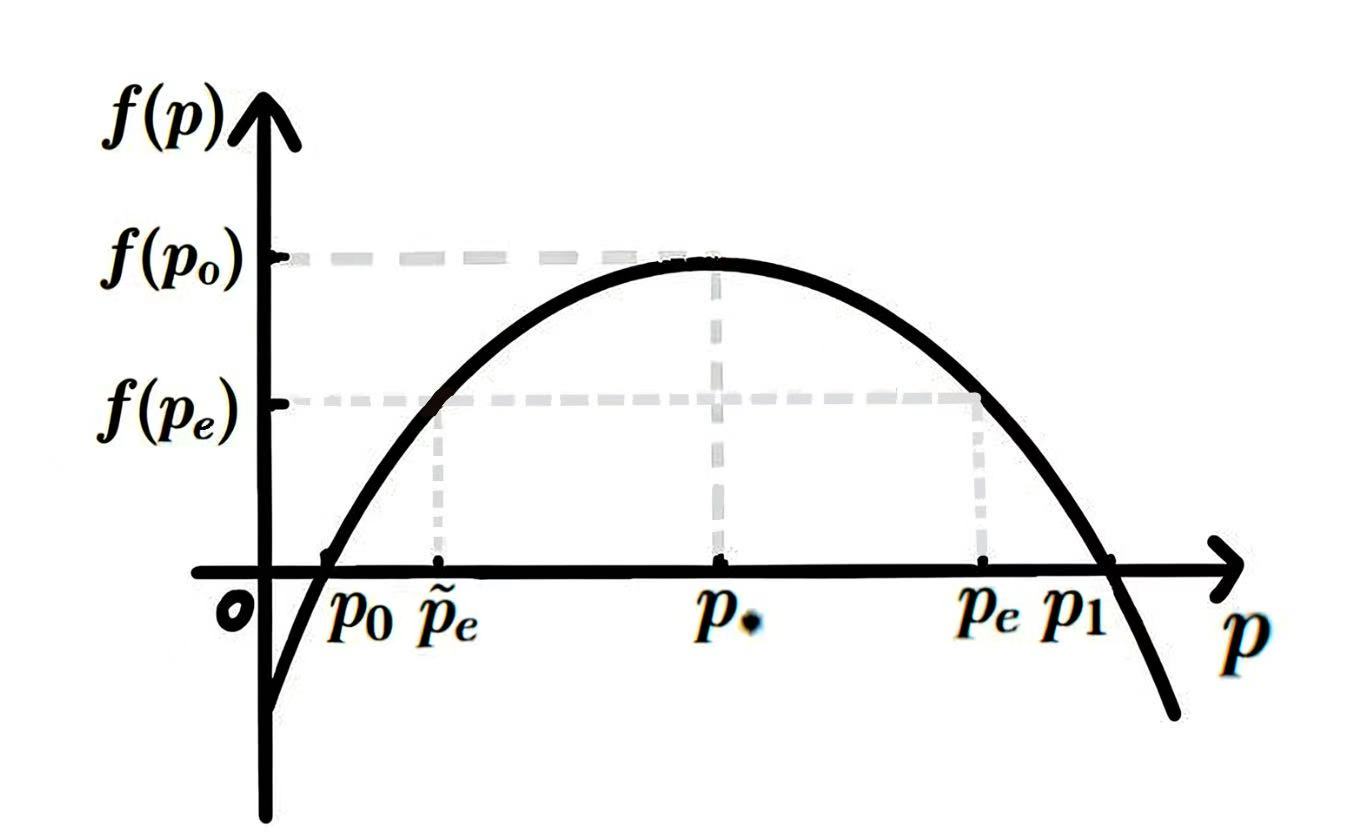}
	\caption{The auxiliary function of $f(p)$.}\label{Bernoulli}
\end{figure}

By applying the Rankine-Hugoniot condition $\eqref{R-H}_{3}$, it can be checked
\begin{equation}
    f(p_{0})=f(p_{1})=0,
\end{equation}
which implies for any $p\in (p_{0},p_{1})$, one has $f(p)>0$. Moreover, it can be checked that $f'(p_{*})=0$, where $p_{*}$ is defined as \eqref{p*A}. 
Denote $\displaystyle \kappa=\frac{\epsilon}{\gamma-1}$, by employing the identity equality \eqref{equiv1}, \eqref{p*A} as well as the formula \eqref{re_Phi}, system \eqref{heat_Euler1}, \eqref{heat_Euler2}, \eqref{heat_Euler3} can be reduced into a single equation
\begin{equation}\label{eq:2rd}
\kappa (A-2p^{\kappa})\partial_{x}p^{\kappa}=F(p^{\kappa},\alpha_{\kappa})
\end{equation}
where
\begin{equation}
    F(p,\alpha):=f(p)+\alpha,
\end{equation}
and $\alpha_{\kappa}:=\kappa (A-2p_{in})\partial_{x}p^{\kappa}(0)$ is an unknown constant to be determined together with $ p^{\kappa} $ under the boundary conditions 
\begin{align}
    &p^{\kappa}(0)=p_{in},\label{bd3}\\
    &p^{\kappa}(1)=p_{e}.\label{bd4}
\end{align}
Then solving system \eqref{heat_Euler1}, \eqref{heat_Euler2}, \eqref{heat_Euler3} with boundary conditions \eqref{bd1}, \eqref{bd2} is reduced to solving boundary value problem \eqref{eq:2rd}, \eqref{bd3}, \eqref{bd4}.

\subsection{Proof of Theorem \ref{thm1} and Theorem \ref{thm3}}\label{section2.2}
In this subsection, we will give the details of the proof for Theorem \ref{thm1}. Namely, we are going to deal with equations \eqref{heat_Euler1}, \eqref{heat_Euler2}, \eqref{heat_Euler3} subjected to the following boundary conditions
\begin{align}
	&U(0)=U_{0},\label{sup_bd1}\\
	&p(1)=p_{e},\label{sup_bd2}
\end{align}
where $p_{e}\in (0,\frac{A}{2})$ is a constant satisfying $p_{e}\neq p_{1}$ and $p_{e}\neq p_{0}$.
Let $p^{\kappa}$ be a solution to system \eqref{eq:2rd}, \eqref{bd3}, \eqref{bd4}, one has
\begin{lem}\label{lem3-1}
    Given $U_{0}$ and $p_{e}$ defined as \eqref{sup_bd1}, \eqref{sup_bd2}.
	Then if $p^{\kappa}\in C^{2}(0,1)\cap C^{1}([0,1])$ is a solution to the boundary value problem \eqref{eq:2rd} and \eqref{bd3}-\eqref{bd4}, there holds
    \begin{equation}\label{prior}
        \min \{p_{0},p_{e}\}<p^{\kappa}(x)<\max \{p_{0},p_{e}\}, \quad x\in (0,1).
    \end{equation}
    Moreover, for $0<p_{e}<p_{0}$, it holds
	\begin{equation}\label{derivative1}
		\partial_{x}p^{\kappa}(0)<0,\quad \partial_{x}p^{\kappa}(1)<0;
	\end{equation}
	for $p_{0}<p_{e}<\frac{A}{2}$, it holds
	\begin{equation}\label{derivative2}
		\partial_{x}p^{\kappa}(0)>0,\quad \partial_{x}p^{\kappa}(1)>0.
	\end{equation}
\end{lem}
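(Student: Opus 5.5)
The plan is to exploit the fact that \eqref{eq:2rd} is an \emph{autonomous first-order} ODE in $p^{\kappa}$ once the constant $\alpha_{\kappa}$ is fixed. This forces every solution to be strictly monotone, and \eqref{prior}, \eqref{derivative1}, \eqref{derivative2} then follow from monotonicity and the boundary values.

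First I check nondegeneracy of the coefficient at the inlet. Since $A=\rho_{0}q_{0}^{2}+p_{0}$,
\begin{equation*}
A-2p_{0}=\rho_{0}q_{0}^{2}-p_{0}=p_{0}(\gamma M_{0}^{2}-1)>0,
\end{equation*}
because $M_{0}>1$ and $\gamma>1$. By hypothesis we also have $p_{e}<\frac{A}{2}$. Hence both boundary values lie in the region $\{p<\frac{A}{2}\}$, where \eqref{eq:2rd} can be written as
\begin{equation*}
\partial_{x}p^{\kappa}=G(p^{\kappa}),\qquad G(p):=\frac{f(p)+\alpha_{\kappa}}{\kappa(A-2p)},
\end{equation*}
with $G$ smooth, hence locally Lipschitz.

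Let $x_{0}:=\sup\{x\in[0,1]:\ p^{\kappa}<\frac{A}{2}\text{ on }[0,x]\}$; this is positive since $p^{\kappa}(0)=p_{0}<\frac{A}{2}$. On $[0,x_{0})$ I claim $\partial_{x}p^{\kappa}$ never vanishes. Suppose $\partial_{x}p^{\kappa}(x_{1})=0$ for some $x_{1}$. Then $G(p^{\kappa}(x_{1}))=0$, so the constant function $p\equiv p^{\kappa}(x_{1})$ is also a solution of $p'=G(p)$ through the same point. Uniqueness for this Lipschitz ODE then forces $p^{\kappa}$ to be constant on $[0,x_{0})$, so $p^{\kappa}\equiv p_{0}$. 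If $x_{0}<1$, then $p^{\kappa}(x_{0})=\frac{A}{2}$ by continuity, which contradicts $p^{\kappa}\equiv p_{0}<\frac{A}{2}$. If $x_{0}=1$, continuity gives $p^{\kappa}(1)=p_{0}\neq p_{e}$, again a contradiction.

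Thus $\partial_{x}p^{\kappa}$ has a constant strict sign on $[0,x_{0})$, and $p^{\kappa}$ is strictly monotone there. The value $p^{\kappa}(x_{0})$ then lies on the $p_{e}$-side of $p_{0}$, so it is either below $p_{0}$ (when decreasing) or, when increasing, bounded by the eventual endpoint. Here a little care is needed: if $x_{0}<1$, I use that $p^{\kappa}$ would have to reach $\frac{A}{2}$ and then come back down to $p_{e}<\frac{A}{2}$. I expect this exclusion to be the \emph{main obstacle}. I would first handle it by observing that $\kappa(A-2p)\partial_{x}p$ is continuous and equals $F(p,\alpha_{\kappa})$. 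Then on the other side of $x_{0}$ the same uniqueness argument applies in the region $\{p>\frac{A}{2}\}$, where the solution is again monotone. This rules out the turning back needed to return to $p_{e}<\frac{A}{2}$ (a return would need a local maximum, where $F=0$ and $A-2p\neq0$, and uniqueness forbids this). Hence $x_{0}=1$, and $p^{\kappa}$ is strictly monotone on all of $[0,1]$ with $\partial_{x}p^{\kappa}\neq0$ on $[0,1)$. Strict monotonicity together with $p^{\kappa}(0)=p_{0}$ and $p^{\kappa}(1)=p_{e}$ gives \eqref{prior} immediately.

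For the endpoint derivatives, at $x=0$ we have $\kappa(A-2p_{0})\partial_{x}p^{\kappa}(0)=\alpha_{\kappa}$. If this vanished, then $\alpha_{\kappa}=0$ and uniqueness would give $p^{\kappa}\equiv p_{0}$, which is impossible. At $x=1$, the quantity $\kappa(A-2p_{e})\partial_{x}p^{\kappa}(1)=F(p_{e},\alpha_{\kappa})$ cannot vanish either, because otherwise $p_{e}$ would be an equilibrium, and backward uniqueness would again make $p^{\kappa}$ constant. By continuity of $\partial_{x}p^{\kappa}$ on $[0,1]$, the endpoint derivatives share the sign of the interior derivative. That sign is negative when $p_{e}<p_{0}$, giving \eqref{derivative1}, and positive when $p_{0}<p_{e}<\frac{A}{2}$, giving \eqref{derivative2}.
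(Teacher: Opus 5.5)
Your route differs from the paper's. The paper takes monotonicity from an external lemma under (S1)/(S2), and gets the endpoint signs from the Hopf lemma applied to the second-order form \eqref{3-1}. You instead use only the autonomous first-order equation \eqref{eq:2rd} and ODE uniqueness, which is more elementary and never uses (S1)/(S2). Your argument on $[0,x_0)$ and your treatment of the endpoint derivatives are sound. However, the step you flag as the main obstacle, namely excluding $x_0<1$, is not closed.

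\textbf{The gap.} Suppose $x_0<1$, so $p^\kappa(x_0)=\frac{A}{2}$ and $\partial_x p^\kappa>0$ on $[0,x_0)$. Your sketch does not exclude two scenarios:
\begin{itemize}
\item $p^\kappa$ touches the level $\frac{A}{2}$ tangentially at $x_0$, so $\partial_x p^\kappa(x_0)=0$, and then goes back down into $\{p<\frac{A}{2}\}$;
\item $p^\kappa$ stays on a plateau $p^\kappa\equiv\frac{A}{2}$ for a while before descending to $p_e$.
\end{itemize}
In both, the solution never enters $\{p>\frac{A}{2}\}$, so your uniqueness argument in that region is vacuous. The turning point is $x_0$ itself, where $A-2p=0$. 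So your parenthetical claim that a return needs a local maximum with $A-2p\neq0$ is false, and uniqueness says nothing at the singular level of $G$.

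\textbf{How to close it.} Use the observation you state but never exploit. Letting $x\to x_0$ in \eqref{eq:2rd} forces $F(\frac{A}{2},\alpha_\kappa)=0$, i.e.\ $\alpha_\kappa=-f(\frac{A}{2})$, which makes the singularity of $G$ removable. From $f'(p)=-(A-p)+\frac{\gamma}{\gamma-1}(A-2p)$ one gets $f'(\frac{A}{2})=-\frac{A}{2}$. Hence, wherever $p^\kappa\neq\frac{A}{2}$,
\begin{equation*}
\partial_x p^\kappa=\frac{f(p^\kappa)-f(\frac{A}{2})}{\kappa(A-2p^\kappa)}\longrightarrow -\frac{f'(\frac{A}{2})}{2\kappa}=\frac{A}{4\kappa}>0\quad\text{as } p^\kappa\to\frac{A}{2}.
\end{equation*}
By continuity of $\partial_x p^\kappa$, this gives $\partial_x p^\kappa(x_0)=\frac{A}{4\kappa}>0$. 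That rules out both tangency and plateaus, and shows $p^\kappa>\frac{A}{2}$ just to the right of $x_0$. Since $p_e<\frac{A}{2}$, there is a first $x_2>x_0$ with $p^\kappa(x_2)=\frac{A}{2}$. Taking the same limit from the left at $x_2$ (same $\alpha_\kappa$) gives $\partial_x p^\kappa(x_2)=\frac{A}{4\kappa}>0$. This contradicts $p^\kappa>p^\kappa(x_2)$ on $(x_0,x_2)$. With this inserted you obtain $x_0=1$, and the rest of your proof goes through.
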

\begin{proof}
By applying the assumptions (S1) or (S2) and lemma A.2 in \cite{FangJiangSun2024JDE} to equation \eqref{eq:2rd}, it can be deduced that $p^{\kappa}(x)$ is a non-decreasing function, which implies  \eqref{prior}. Moreover, equations \eqref{heat_Euler1}, \eqref{heat_Euler2}, \eqref{heat_Euler3} can be reduced as
\begin{equation}\label{3-1}
	\kappa (A-2p)\partial_{xx}p-2\kappa(\partial_{x}p)^2-\partial_{x}(\frac{1}{2}(A-p)^{2}+\frac{\gamma}{\gamma-1}p(A-p))=0
\end{equation}
using \eqref{equiv1} and \eqref{p*A}.
Then the assumptions (S1) or (S2) implies that  \eqref{3-1} is a second order elliptic partial differential equation. Therefore, \eqref{derivative1}, \eqref{derivative2} can be obtained by applying the Hopf lemma.
\end{proof}

Once Lemma \ref{lem3-1} holds, equation \eqref{eq:2rd} implies the strict monotonicity of $p^{\kappa}(x)$. Namely, it is strictly decreasing for $0<p_{e}< p_{0}$, and strictly increasing for $p_{0} < p_{e} < \frac{A}{2}$. Correspondingly, it follows from \eqref{derivative1} and \eqref{derivative2} that $\alpha_{\kappa} < 0$ in the former case and $\alpha_{\kappa} > 0$ in the latter. Then there exists an inverse function $X_{\kappa}(p)$ of $p^{\kappa}(x)$, satisfying
\begin{align}
    & \frac{dX_{\kappa}(p)}{dp}=  \frac{\kappa(A-2p)}{F(p,\alpha_{\kappa})}, \label{eq:inverse}\\
	& X_{\kappa}(p_{0})=0, \quad  X_{\kappa}(p_{e})=1.\label{bd:inverse} 
\end{align}
The solution of this ODE boundary value problem \eqref{eq:inverse},\eqref{bd:inverse} can be described as
\begin{equation}
    X_{\kappa}(p)=\int_{p_{0}}^{p}\frac{\kappa (A-2s)}{F(s;\alpha_{\kappa})} ds,
\end{equation}
where $\alpha_{\kappa}$ satisfies 
\begin{equation}
    1=\int_{p_{0}}^{p_{e}}\frac{\kappa (A-2p)}{F(p;\alpha_{\kappa})} dp.
\end{equation}
Moreover, 
Next, it suffices to prove that 
\begin{enumerate}
    \item the existence and convergence of $\alpha_{\kappa}$;
    \item the convergence of $p^{\kappa}(x)$,
    \item the limit $p^{0}$ and $\alpha_{0}$ satisfy 
    \begin{equation}\label{weak-sol1-hp}
        F(p^{0};\alpha_{0})=0
    \end{equation}
    in the weak sense. Namely, for any $\phi(x)\in C^{\infty}([0,1])$, it holds
    \begin{equation}\label{non-weak-sol1-hp}
        \int_{0}^{1}F(u^{0};\alpha_{0})\phi(x) dx=0.
    \end{equation}
    \item the convergence of $X_{\kappa}(p)$.
\end{enumerate}

For (i), we have
\begin{lem}\label{lem:exists_alpha}
    For any $\kappa>0$, there are exists a unique $\alpha_{\kappa}$ such that
    \begin{equation}\label{qqq}
        Q_{\kappa}(\alpha_{\kappa})=1,
    \end{equation}
    where $\displaystyle Q_{\kappa}(\alpha)=\int_{p_{0}}^{p_{e}}\frac{\kappa (A-2p)}{F(p;\alpha)}dp$.
\end{lem}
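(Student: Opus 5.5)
The plan is to show that $Q_{\kappa}$ is continuous and strictly monotone on the exact range of $\alpha$ for which $F(\cdot;\alpha)$ has no zero between $p_{0}$ and $p_{e}$. I would then show that $Q_{\kappa}$ goes to $0$ at one end of this range and to $+\infty$ at the other, so the intermediate value theorem gives existence and monotonicity gives uniqueness. Since $p_{e}<\frac{A}{2}$ and $p_{0}<p_{*}<\frac{A}{2}$, the factor $\kappa(A-2p)$ is bounded above and below by positive constants on the closed interval between $p_{0}$ and $p_{e}$. So everything reduces to the sign and size of $F(p;\alpha)=f(p)+\alpha$ there. I will use the facts from Subsection \ref{section2.1}: $f$ is strictly concave, $f(p_{0})=f(p_{1})=0$, and $f'(p_{*})=0$. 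Hence $f'>0$ on $(0,p_{*})$, $f'<0$ on $(p_{*},\infty)$, $f<0$ outside $[p_{0},p_{1}]$, and $f'(p_{0})>0$, $f'(p)<0$ for $p>p_{1}$. By Lemma \ref{lem3-1}, $\partial_x p^\kappa$ has a fixed sign, so \eqref{eq:2rd} forces $F(p;\alpha_\kappa)$ to have one sign along the whole range of $p^{\kappa}$. This justifies restricting to such $\alpha$.

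\emph{Case $0<p_{e}<p_{0}$.} Write $Q_{\kappa}(\alpha)=\int_{p_{e}}^{p_{0}}\frac{\kappa(A-2p)}{-F(p;\alpha)}dp$. On $[p_{e},p_{0}]$ the function $f$ is increasing with maximum $f(p_{0})=0$, so $F<0$ on this interval exactly when $\alpha<0$; the admissible set is $(-\infty,0)$. On it $\partial_{\alpha}\frac{1}{-F}=\frac{1}{F^{2}}>0$, so $Q_{\kappa}$ is continuous (dominated convergence on compact sub-ranges) and strictly increasing. As $\alpha\to-\infty$, $Q_{\kappa}(\alpha)=O(1/|\alpha|)\to0$. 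As $\alpha\to0^{-}$, monotone convergence gives $Q_{\kappa}(\alpha)\to\int_{p_{e}}^{p_{0}}\frac{\kappa(A-2p)}{-f(p)}dp$. Since $-f(p)\le C(p_{0}-p)$ near $p_{0}$, this integral diverges logarithmically. Hence $Q_{\kappa}$ maps $(-\infty,0)$ onto $(0,+\infty)$, and there is exactly one $\alpha_{\kappa}<0$ with $Q_{\kappa}(\alpha_{\kappa})=1$, consistent with the sign found after Lemma \ref{lem3-1}.

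\emph{Case $p_{0}<p_{e}<\frac{A}{2}$.} Here $Q_{\kappa}(\alpha)=\int_{p_{0}}^{p_{e}}\frac{\kappa(A-2p)}{F(p;\alpha)}dp$, and we need $F>0$ on $[p_{0},p_{e}]$, i.e. $\alpha>\alpha_{c}:=-\min_{[p_{0},p_{e}]}f$. By concavity the minimum is attained at an endpoint. If $p_{e}\le p_{1}$, it is $f(p_{0})=0$, so $\alpha_{c}=0$ and the touching point is $p_{0}$, where $f'(p_{0})>0$. If $p_{e}>p_{1}$, it is $f(p_{e})<0$, so $\alpha_{c}=-f(p_{e})>0$ and the touching point is $p_{e}$, where $f'(p_{e})<0$. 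In both cases $\alpha_{c}\ge0$, which fits $\alpha_{\kappa}>0$. On $(\alpha_{c},\infty)$, $Q_{\kappa}$ is continuous and strictly decreasing, since $\partial_{\alpha}\frac1F=-\frac1{F^2}$, and $Q_{\kappa}(\alpha)\to0$ as $\alpha\to\infty$.

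The one delicate step, in both cases, is the blow-up $Q_{\kappa}(\alpha)\to+\infty$ as $\alpha\to\alpha_{c}^{+}$. I would handle it by using that $f'\neq0$ at the touching point $p_{t}$. This gives $0\le f(p)+\alpha_{c}\le C|p-p_{t}|$ near $p_{t}$, hence $F(p;\alpha)\le C|p-p_{t}|+(\alpha-\alpha_{c})$. Integrating over a fixed neighbourhood of $p_{t}$ inside $[p_{0},p_{e}]$ yields $Q_{\kappa}(\alpha)\ge c\,\kappa\log\frac{1}{\alpha-\alpha_{c}}-C\to+\infty$. This nondegeneracy is exactly where $p_{e}\neq p_{1}$ is not needed, but the endpoint-touching structure (forced by concavity) is essential. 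The intermediate value theorem then gives a unique $\alpha_{\kappa}\in(\alpha_{c},\infty)$ with $Q_{\kappa}(\alpha_{\kappa})=1$, proving \eqref{qqq}.
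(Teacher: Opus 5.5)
Your proof is correct and follows essentially the same route as the paper. In both, Lemma \ref{lem3-1} fixes the sign of $F(\cdot;\alpha_{\kappa})$, and $Q_{\kappa}$ is shown to be continuous and strictly monotone on the admissible range, to tend to $0$ at the far end and to blow up logarithmically at the critical value, with the intermediate value theorem concluding. Your only reorganization is to merge the paper's cases $p_{e}\in(p_{0},p_{1})$ and $p_{e}\in(p_{1},\frac{A}{2})$ through $\alpha_{c}=-\min_{[p_{0},p_{e}]}f$.

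Your blow-up step is in fact stated in the right direction. It is a lower bound on $Q_{\kappa}$, and it only needs $f\in C^{1}$, not $f'(p_{t})\neq0$. By contrast, the paper's cases $0<p_{e}<p_{0}$ and $p_{1}<p_{e}<\frac{A}{2}$ write the tangent-line comparison as an upper bound $Q_{\kappa}(\alpha)\le\cdots\to+\infty$, which by itself does not give the divergence.
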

\begin{proof}
    We divided the proof into three cases: 
    \begin{itemize}
        \item For $p_{e}\in(0,p_{0})$:  By Lemma \ref{lem3-1}, it can be verified $\alpha_{\kappa}<0$ and $Q_{\kappa}(\alpha)$ is a continuous, strictly increasing function with respect to $\alpha<0$. Then 
        \begin{align}
            Q_{\kappa}(\alpha)&=\int_{p_{0}}^{p_{e}}\frac{\kappa (A-2p)}{F(p;\alpha)}dp\notag\\
            &\leq \int_{p_{0}}^{p_{e}}\frac{\kappa A}{f^{\prime}(p_{0})(p-p_{0})+\alpha}dp\notag\\
            &=\frac{\kappa A}{f^{\prime}(p_{0})}ln\frac{f^{\prime}(p_{0})(p_{e}-p_{0})+\alpha}{\alpha}\notag\\
            &\to +\infty ,\quad \text{as} \quad \alpha \to 0-. \label{ww}
        \end{align}
        Moreover, $\displaystyle \lim_{\alpha \to -\infty}Q_{\kappa}(\alpha)=0$ and \eqref{ww} indicates that there exists a unique $\alpha_{\kappa}$ such that \eqref{qqq} holds.
        \item For $p_{e}\in(p_{0},p_{*})\cup  (p_{*},p_{1})$: Similarly, Lemma \ref{lem3-1} implies $\alpha_{\kappa}>0$. As $Q_{\kappa}(\alpha)$ is a strictly continuous decreasing function with respect to $\alpha\in (0,+\infty)$, then
    \begin{equation}\label{bb}
        \lim_{\alpha \to +\infty}Q_{\kappa}(\alpha)=0.
    \end{equation}
    On the other hand,
    \begin{align}
			&\int_{p_{0}}^{p_{e}}\frac{\kappa(A-2p)}{f(p)+\alpha}dp\notag \\
			& >\int_{p_{0}}^{p_{e}}\frac{\kappa A}{f'(p_{0})(p-p_{0})+\alpha}dp-\int_{p_{0}}^{p_{e}}\frac{2\kappa p}{f'(p_{0})(p-p_{0})+\alpha}dp  \notag \\
			& =\frac{\kappa A}{f'(p_{0})}\ln\left(1+\frac{f'(p_{0})(p_{e}-p_{0})}{\alpha}\right)  \notag \\
			& \quad - \left [ \frac{2\kappa(p_{e}-p_{0})}{f'(p_{0})}+\frac{2\kappa}{f'(p_{0})}\left(p_{0}-\frac{\alpha}{f'(p_{0})}\right)\ln\left(1+\frac{f'(p_{0})(p_{e}-p_{0})}{\alpha}\right)\right]\notag \\
			& =-\frac{2\kappa(p_{e}-p_{0})}{f'(p_{0})}+\frac{\kappa}{f'(p_{0})}\left(A-2p_{0}+\frac{2\alpha}{f'(p_{0})}\right)\ln\left(1+\frac{f'(p_{0})(p_{1}-p_{0})}{\alpha}\right)\notag\\
            &\to +\infty \quad\text{as}\quad \alpha \to 0+.\label{aa}
		\end{align}
        Therefore, \eqref{bb} and \eqref{aa} imply that there exists a unique $\alpha_{\kappa}$ such that $Q_{\kappa}(\alpha_{\kappa})=1$.
        \item  For $p_{e}\in (p_{1},\frac{A}{2})$:  One can deduce from Lemma \ref{lem3-1} that $\alpha_{\kappa}>-f(\frac{A}{2})>0$ and $Q_{\kappa}(\alpha)(\alpha)$ is a strictly continuous decreasing function with respect to $\alpha$. Moreover,
        \begin{align*}
            Q_{\kappa}(\alpha)&=\int_{p_{0}}^{p_{e}}\frac{\kappa (A-2s)}{f(s)+\alpha}ds\\
            &\leq\int_{p_{0}}^{p_{e}}\frac{\kappa(A-2s)}{f^{\prime}(p_{e})(s-p_{e})+f(p_{e})+\alpha}ds\\
            &\leq\int_{p_{0}}^{p_{e}}\frac{\kappa(A-2p_{1})}{f^{\prime}(p_{e})(s-p_{e})+f(p_{e})+\alpha}ds\\
            &=\frac{A-2p_{1}}{f^{\prime}(p_{e})}\ln \frac{f(p_{e})+\alpha}{f^{\prime}(p_{e})(p_{0}-p_{e})}\\
            &\to +\infty, \quad \text{as} \hspace{1mm}\alpha \to -f(p_{e}).
        \end{align*}
        Therefore, 
        \begin{equation}
            \lim_{\alpha\to -f(p_{e})}Q_{\kappa}(\alpha)=+\infty. 
        \end{equation}
        Then we finish the proof for this lemma.
    \end{itemize}
\end{proof}
As an immediate consequence of Lemma \ref{lem:exists_alpha}, we have:
\begin{lem}
    Let $\kappa>0$. Then
    \begin{itemize}
        \item for given $p_{e}\in (0,p_{*})$,  there exists a supersonic solution $(p^{\kappa},\alpha_{\kappa})$ satisfying the equation \eqref{eq:2rd}  and the boundary conditions \eqref{bd3},\eqref{bd4}.
        \item for given $p_{e}\in (p_{*},\frac{A}{2})$, there exists a transonic shock solution $(p^{\kappa},\alpha_{\kappa})$ satisfying the equation \eqref{eq:2rd}  and the boundary conditions \eqref{bd3},\eqref{bd4}.
    \end{itemize}
\end{lem}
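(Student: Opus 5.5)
The plan is to build the solution from the inverse function $X_\kappa$. Fix $\kappa>0$ and let $\alpha_\kappa$ be the unique constant given by Lemma \ref{lem:exists_alpha}, so that $Q_\kappa(\alpha_\kappa)=1$. Set
\begin{equation*}
X_\kappa(p)=\int_{p_0}^{p}\frac{\kappa(A-2s)}{F(s;\alpha_\kappa)}\,ds
\end{equation*}
for $p$ between $p_0$ and $p_e$. We then check that $X_\kappa$ is a $C^1$ strictly monotone bijection from that interval onto $[0,1]$, and define $p^\kappa:=X_\kappa^{-1}$.

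The first thing to verify is that the integrand has constant sign and no singularity on the closed interval. We have $A-2s>0$ because $s\le\max\{p_0,p_e\}<A/2$. For the denominator, the sign of $F(s;\alpha_\kappa)=f(s)+\alpha_\kappa$ must agree with that of $p_e-p_0$, which is exactly what the case analysis in the proof of Lemma \ref{lem:exists_alpha} guarantees.
\begin{itemize}
\item When $p_e<p_0$, we have $\alpha_\kappa<0$ and $f(s)\le 0$ for $s\le p_0$, so $F<0$.
\item When $p_0<p_e<p_1$, we have $f\ge0$ on $[p_0,p_e]$ and $\alpha_\kappa>0$, so $F>0$.
\item When $p_e>p_1$, $f$ is decreasing past $p_*$, its minimum on $[p_0,p_e]$ is $f(p_e)<0$, and $\alpha_\kappa>-f(p_e)$, so $F>0$.
\end{itemize}
Hence $X_\kappa'\neq0$, $X_\kappa$ is smooth, $X_\kappa(p_0)=0$, and $X_\kappa(p_e)=Q_\kappa(\alpha_\kappa)=1$. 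The inverse function theorem then gives $p^\kappa\in C^2([0,1])$ (indeed smooth) with $p^\kappa(0)=p_0$, $p^\kappa(1)=p_e$, and
\begin{equation*}
\kappa(A-2p^\kappa)\partial_x p^\kappa=F(p^\kappa,\alpha_\kappa).
\end{equation*}
Evaluating at $x=0$ recovers $\alpha_\kappa=\kappa(A-2p_0)\partial_x p^\kappa(0)$, which is consistent with the definition of $\alpha_\kappa$. From $p^\kappa$ we recover $u=A-p^\kappa$ (using $\rho u=1$) and $\rho=1/u$, so that $U^\kappa(0)=U_0$ by \eqref{equiv1}. Differentiating the first-order relation gives back \eqref{3-1}, so $U^\kappa$ solves \eqref{heat_Euler1}–\eqref{heat_Euler3} with \eqref{sup_bd1}–\eqref{sup_bd2}.

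Next we classify the type of the solution. With $\rho u=1$ and $p+u^2\rho=A$, the flow is sonic exactly when $\gamma p=\rho u^2=A-p$, that is, at $p=p_*=A/(\gamma+1)$. It is supersonic for $p<p_*$ and subsonic for $p>p_*$. Since $p^\kappa$ is monotone between $p_0<p_*$ and $p_e$, there are two cases.
\begin{itemize}
\item For $p_e<p_*$, the range of $p^\kappa$ lies in $(0,p_*)$, so the solution is supersonic throughout.
\item For $p_e>p_*$, $p^\kappa$ crosses $p_*$ exactly once, so it is supersonic before that point and subsonic after it, which makes the solution transonic.
\end{itemize}

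The main obstacle is the sign of $F$ on $[p_0,p_e]$ in the case $p_e\in(p_1,A/2)$, where $f$ becomes negative. It requires that $\alpha_\kappa>-f(p_e)$, and the minimum of $f$ over $[p_0,p_e]$ must be attained at $p_e$. I would justify the latter from the concavity of $f$ together with $f(p_0)=0>f(p_e)$: a concave function attains its minimum over an interval at an endpoint. I would also double check the degenerate cases $p_e=p_*$ and $p_e=p_1$, which are excluded by the statement.
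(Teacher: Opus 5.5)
Your proposal is correct and follows the paper's route. The paper presents this lemma as an immediate consequence of Lemma \ref{lem:exists_alpha} through the inverse-function formulation \eqref{eq:inverse}--\eqref{bd:inverse}, and you supply the details it omits: the constant sign of $F(\cdot;\alpha_\kappa)$ on the pressure interval, the inversion of $X_\kappa$, and the sonic threshold $p_*=A/(\gamma+1)$ used for the supersonic/transonic classification; your bound $\alpha_\kappa>-f(p_e)$ for $p_e\in(p_1,\frac{A}{2})$ is also the correct one (the paper's ``$-f(\frac{A}{2})$'' is a slip), and it is exactly what the blow-up of $Q_\kappa$ as $\alpha\to -f(p_e)^{+}$ provides.
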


Now we have proved the existence of the solution to the inverse boundary value problem \eqref{eq:inverse}, \eqref{bd:inverse}, which shows that for every $\kappa>0$, there exists a $p^{\kappa}(x)$ satisfying \eqref{eq:2rd}, \eqref{bd3}, \eqref{bd4}. In the following arguments, we are going to investigate the asymptotic behavior of the solution $p^{\kappa}$ as $\kappa$ vanishes.

In fact, as $p^{\kappa}$ is strictly monotonic, then the estimates \eqref{prior} implies 
\begin{equation}
    T.V.p^{\kappa}=|p_{e}-p_{0}|.
\end{equation}
By employing Helly's theorem, there exists a subsequence $p^{\kappa_{n}}$ and a bounded variation function $p^{0}$ such that for every $n\geq 1$ and $a.e. x\in [0,1]$,
\begin{equation}
    p^{\kappa_{n}}\to p^{0}, \quad \text{as} \quad n\to \infty.
\end{equation}
Thus, we have proved the convergence of $p^{\kappa}(x)$. Next, it suffice to check that the uniqueness of $p^{0}$ and to prove $p^{0}$ is a weak solution to equation \eqref{weak-sol1-hp}. Firstly, we have to investigate the limit behavior of $\alpha^{\kappa}$. 
\begin{lem}\label{lem:2-4}
    For any $\kappa>0$, 
    \begin{itemize}
        \item when $p_{e}\in (0,p_{0})\cup(p_{0},p_{*})\cup  (p_{*},p_{1})$, it holds
    \begin{equation}
        \lim_{\kappa \to 0}\alpha_{\kappa}=0;
    \end{equation}
        \item when $p_{e}\in (p_{1},\frac{A}{2})$, it holds
        \begin{equation}\label{lim_alpha21}
            \lim_{\kappa \to 0}\alpha_{\kappa}=-f(p_{e}).
        \end{equation}
    \end{itemize} 
\end{lem}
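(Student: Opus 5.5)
The plan is to pass to the limit in the normalization identity of Lemma \ref{lem:exists_alpha},
\begin{equation*}
1=Q_{\kappa}(\alpha_{\kappa})=\kappa\int_{p_{0}}^{p_{e}}\frac{A-2p}{f(p)+\alpha_{\kappa}}\,dp .
\end{equation*}
The prefactor $\kappa\to0$, so the integral $\int_{p_0}^{p_e}\frac{A-2p}{f(p)+\alpha_\kappa}dp$ must blow up like $\kappa^{-1}$. Since $A-2p>0$ on the relevant range ($p<A/2$), this forces $\alpha_\kappa$ toward the value at which $f+\alpha$ first vanishes on the interval of integration. I will argue by contradiction along subsequences. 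First I need boundedness of $\alpha_\kappa$. Suppose $|\alpha_{\kappa_n}|\to\infty$. In the cases $\alpha>0$, or $\alpha<0$ with $p_e<p_0$, the integrand is then $O(1/|\alpha|)$ uniformly. The integral stays bounded, and hence $Q_{\kappa_n}\to0$, which contradicts $Q=1$. So we may assume $\alpha_{\kappa_n}\to\bar\alpha$ along a subsequence.

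\textbf{Case $p_e\in(0,p_0)$.} Here $\alpha_\kappa<0$, and $f<0$ on $(p_e,p_0)$ because $f$ is concave with zeros at $p_0,p_1$. Suppose $\bar\alpha<0$. Then $|f+\alpha_\kappa|\ge|\bar\alpha|/2$ on $[p_e,p_0]$ for $\kappa$ small, so the integral is bounded and $Q_\kappa\to0$, a contradiction. Hence $\bar\alpha=0$, and since the subsequence was arbitrary, $\alpha_\kappa\to0$.

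\textbf{Case $p_e\in(p_0,p_1)$.} Here $\alpha_\kappa>0$ and $f\ge0$ on $[p_0,p_e]$, so $f+\alpha_\kappa\ge\alpha_\kappa$. If $\bar\alpha>0$, the integral is bounded and $Q_\kappa\to0$, again a contradiction. So $\bar\alpha=0$. In this case I must also check that $\alpha=0$ really is the singular value: near $p_0$ we have $f(p)\approx f'(p_0)(p-p_0)$ with $f'(p_0)>0$, and the logarithmic lower bound \eqref{aa} shows $Q_\kappa(\alpha)\sim\kappa\,C\ln(1/\alpha)$. This is consistent with $\alpha_\kappa\sim e^{-c/\kappa}\to0$.

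\textbf{Case $p_e\in(p_1,A/2)$.} Here $f(p_e)<0$, and $f$ is decreasing on $[p_1,p_e]$ while positive on $(p_0,p_1)$. So $\min_{[p_0,p_e]}f=f(p_e)$, attained only at the endpoint $p_e$. The denominator stays positive on the whole interval exactly when $\alpha_\kappa>-f(p_e)$. This is the restriction used in the proof of Lemma \ref{lem:exists_alpha}: positivity of $F(p^\kappa,\alpha_\kappa)=\kappa(A-2p)p_x$ along the increasing solution gives it. Hence $\bar\alpha\ge-f(p_e)$. If $\bar\alpha>-f(p_e)$, the denominator is bounded below by a positive constant for small $\kappa$, and $Q_\kappa\to0$ gives a contradiction. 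Therefore $\bar\alpha=-f(p_e)$, and \eqref{lim_alpha21} follows for the full family.

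The \textbf{main obstacle} is the uniform lower bound on the denominator, which needs strict positivity margins. In the case $p_e<p_0$, I must confirm $\alpha_\kappa<0$ makes $f+\alpha_\kappa$ strictly negative on the whole interval (it is, since $f\le0$ there). For the upper unboundedness in the case $p_e<p_0$, I also need to rule out $\alpha_\kappa\to-\infty$, which is handled by the same $O(1/|\alpha|)$ bound. Since each step uses only dominated/bounded integrands, no delicate asymptotics beyond these sign facts are needed.
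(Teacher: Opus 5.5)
Your proof is correct, but it takes a different route from the paper. The paper argues quantitatively. Using concavity of $f$, it bounds $f$ from below by an explicit chord (for $p_e<p_0$, and on $[p_1,p_e]$) or by a tent function (on $[p_0,p_1]$). It then integrates the comparison explicitly to get a logarithmic majorant $\tilde Q_\kappa(\alpha)$ of $Q_\kappa(\alpha)$, solves $\tilde Q_\kappa(\tilde\alpha_\kappa)=1$ in closed form, and squeezes $\alpha_\kappa$ between the limiting value ($0$ or $-f(p_e)$) and $\tilde\alpha_\kappa$.

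You argue by compactness and contradiction instead. You use only:
\begin{itemize}
\item the a priori sign constraints on $\alpha_\kappa$;
\item the sign of $f$ on the interval of integration;
\item the fact that for $p_e>p_1$ the minimum of $f$ on $[p_0,p_e]$ is attained only at $p_e$;
\item the trivial bound $|A-2p|\le A$.
\end{itemize}
Any subsequence staying away from the singular value keeps the integrand bounded, so $Q_\kappa(\alpha_\kappa)=O(\kappa)\to 0\neq 1$.

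The paper's route buys an explicit exponential rate, e.g.\ $|\alpha_\kappa|\le Ce^{-c/\kappa}$ in the first two cases, which you mention only heuristically and which reflects the exponentially thin layer. Your route buys robustness. It needs no concavity-based comparison functions, no monotonicity of $Q_\kappa$ in $\alpha$, and no explicit integration, and it treats all of $(p_0,p_1)$ uniformly.

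Your argument also avoids a slip in the paper's case (c). There the majorant contains $\kappa\int_{p_0}^{p_1}A/f\,dp$, which diverges because $f$ has simple zeros at $p_0$ and $p_1$, so the constant $C_{p_0,p_1}$ is infinite. The repair is exactly your kind of estimate: $f+\alpha_\kappa\ge\alpha_\kappa>-f(p_e)>0$ on $[p_0,p_1]$.
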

\begin{proof}
    \textbf{(a). For $0<p_{e}<p_{0}$,} define
	\begin{equation}
		\tilde{L}(p)=\frac{f(p_{e})}{p_{e}-p_{0}}(p-p_{0})\defs l(p-p_{0}),
	\end{equation}
    where $\displaystyle l=\frac{f(p_{e})}{p_{e}-p_{0}}>0$.
    Obviously, for any $p\in (p_{e},p_{0})$, it holds
    \begin{equation}
        0>f(p)>\tilde{L}(p).
    \end{equation}
    Then we have
    \begin{align}
        Q_{\kappa}(\alpha) & = \int_{p_{0}}^{p_{e}}\frac{\kappa(A-2p)}{f(p)+\alpha}dp  \notag \\
        &\leq \int_{p_{0}}^{p_{e}} \frac{\kappa A}{f(p)+\alpha}dp\notag \\
        &\leq \int_{p_{0}}^{p_{e}} \frac{\kappa A}{\tilde{L}(p)+\alpha}dp\notag \\
        &= \int_{p_{0}}^{p_{e}} \frac{\kappa A}{l(p-p_{0})+\alpha}dp\notag \\
        &=\frac{\kappa A}{l}ln\frac{l(p_{e}-p_{0})+\alpha}{\alpha}\notag \\
        &\defs \tilde{Q}_{\kappa}(\alpha).
    \end{align}
    By solving $\tilde{Q}_{\kappa}(\alpha)=1$, we have $\displaystyle \tilde{\alpha}_{\kappa}=\frac{l(p_{e}-p_{0})}{e^{\frac{l}{\kappa A}}-1}$. Since $Q_{\kappa}(\alpha)$ is strictly increasing for any $\alpha\in (-\infty,0)$ and 
    \begin{equation}
        \lim_{\kappa \to 0}\tilde{\alpha_{\kappa}}=0.
    \end{equation}
    Then $\tilde{\alpha}_{\kappa}<\alpha_{\kappa}<0$ implies $\lim_{\kappa \to 0}\alpha_{\kappa}=0$.
    
	\textbf{(b). For $p_{0}<p_{e}<p_{*}$,} define:
	\[
		\tilde{L}(p)=
		\begin{cases}s(p-p_{0}),\quad   
        &p_{0}\leq p\leq p_{*},  \\
             -s(p-p_{1}), \quad 
        &p_{*}\leq p\leq p_{1},
		\end{cases}
	\]
	where
	\begin{equation}
		s:=\frac{f(p_{*})}{p_{*}-p_{0}}=-\frac{f(p_{*})}{p_{*}-p_{1}}>0
	\end{equation}
	Obviously, $0\leq \tilde{L}(p)\leq f(p)$ for any $p\in (p_{0},p_{1})$. Thus
	\begin{align}
		Q_{\kappa}(\alpha) & = \int_{p_{0}}^{p_{e}}\frac{\kappa(A-2p)}{f(p)+\alpha}dp  \notag \\
        &\leq \int_{p_{0}}^{p_{1}}\frac{\kappa(A-2p)}{f(p)+\alpha}dp  \notag \\
		& \leq \int_{p_{0}}^{p_{1}}\frac{\kappa A}{\hat{L}(p)+\alpha}dp \notag  \\
       &=\left(\int_{p_{0}}^{p_{*}}+\int_{p_{*}}^{p_{1}}\right)\frac{\kappa A}{\hat{L}(p)+\alpha}dp \notag\\
	   & =\frac{2\kappa A}{s}\ln(1+\frac{s}{2\alpha}(p_{1}-p_{0})) \notag \\
	   & =:\tilde{Q}_{\kappa}(\alpha)
	\end{align}
	Let $\tilde{Q}_{\kappa}(\alpha)=1$, then we have
	\begin{equation}
		\alpha=\frac{s(p_{1}-p_{0})}{2(e^{\frac{s}{2\kappa A}}-1)}=:\tilde{\alpha}_{\kappa}
	\end{equation}
	Obviously, $$\lim_{\kappa\to 0}\tilde{\alpha}_{\kappa}=0.$$ Then from $0<\alpha_{\kappa}<\hat{\alpha}_{\kappa}$, we have $\lim_{\kappa \to 0}\alpha_{\kappa}=0$.

    \textbf{(c). For $p_{1}<p_{e}<\frac{A}{2}$,} $Q_{\kappa}(\alpha)$ is a decreasing function with respect to $\alpha$. Let 
    \begin{equation}
        l:=\frac{f(p_{e})}{p_{e}-p_{1}}<0.
    \end{equation}
    Then for any $\kappa>0$,
    \begin{align}
        Q_{\kappa}(\alpha)&=\int_{p_{0}}^{p_{e}}\frac{\kappa (A-2s)}{f(s)+\alpha}ds\\
        &=(\int_{p_{0}}^{p_{1}}+\int_{p_{1}}^{p_{e}})\frac{\kappa (A-2s)}{f(s)+\alpha}ds\\
        &\leq \kappa \int_{p_{0}}^{p_{1}}\frac{A}{f(p)}dp+\int_{p_{1}}^{p_{e}}\frac{\kappa A}{l(p-p_{1})+\alpha}dp\\
        &\leq \kappa \int_{p_{0}}^{p_{1}}\frac{A}{f(p)}dp+\frac{\kappa A}{l}ln \frac{l(p_{e}-p_{1})+\alpha}{\alpha}\\
        &=\kappa C_{p_{0},p_{1}}+\frac{\kappa A}{l}\ln \frac{f(p_{e})+\alpha}{\alpha}\\
        &:=\tilde{Q}(\alpha).
    \end{align}
    where $C_{p_{0},p_{1}}$ is a constant depending on $p_{0}$, $p_{1}$.
    Set 
    \begin{equation}\label{lim}
        \tilde{Q}_{\kappa}(\tilde{\alpha})=1,
    \end{equation}
    it holds
    \begin{equation}
        \tilde{\alpha}_{\kappa}=\frac{f(p_{e})}{e^{(1-\kappa C_{p_{0},p_{1}})\frac{l}{\kappa A}}-1}.
    \end{equation}
    Therefore, $\displaystyle -f(p_{e})< \alpha_{\kappa}<\tilde{\alpha}_{\kappa}$.
    Let $\kappa$ goes to $0$, we have $\lim_{\kappa \to 0}\alpha_{\kappa}=-f(p_{e})$.   
\end{proof}
\begin{rem}
    From the expression of 
    \begin{equation}
        \alpha_{\kappa}=\kappa (A-2p_{0}) \partial_{x}p(0)
    \end{equation}
    and \eqref{lim_alpha2} in Lemma \ref{lem:2-4}, we have
    \begin{equation}
        \lim_{\kappa \to 0}\partial_{x}p(0)=+\infty,
    \end{equation}
    which indicates that the pressure changes sharply at the the entrance and there exists a "boundary layer" at  $x=0$. Elaborate calculations can be seen in the following sequence.
\end{rem}

Now, it can be verified that for any $\phi(x)\in C^{\infty}([0,1])$, it holds
\begin{align}
    &\quad \kappa_{n}\int_{0}^{1}(A-2p^{\kappa_{n}})\partial_{x}p^{\kappa_{n}}\phi(x)dx\notag \\
	&=\kappa_{n} A p^{\kappa_{n}}\phi(x)|_{x=0}^{x=1}  -\kappa_{n}\int_{0}^{1}(A-2p^{\kappa_{n}})p^{\kappa_{n}}\partial_{x}\phi(x) dx\notag \\
	&\quad -\kappa_{n}(p^{\kappa_{n}})^{2}\phi(x)|_{x=0}^{x=1}  +\kappa_{n}\int_{0}^{1}(p^{\kappa_{n}})^{2}\partial_{x}\phi(x)dx\notag\\
    &\quad \to 0,\quad \text{as} \quad \kappa_{n} \to 0.
\end{align}
Moreover, 
\begin{equation*}
		\lim_{\kappa_{n}\to 0}\int_{0}^{1}F(p^{\kappa_{n}},\alpha_{\kappa_{n}})\cdot \phi (x)dx=\int_{0}^{1}F(p^{0},\alpha_{0})\cdot \phi(x)dx.
\end{equation*}
Therefore, the limit function $p^{0}$ is a weak solution to equation \eqref{weak-sol1-hp}. Next, we are going to investigate the limit behavior of the function $X_{\kappa}$. Let 
\begin{equation}\label{I}
     I_{\kappa}(p):=\frac{X_{\kappa}(p)}{1-X_{\kappa}(p)}:=\frac{\int_{p_{0}}^{p}\frac{A-2s}{F(s;\alpha_{\kappa})}ds}{\int_{p}^{p_{e}}\frac{A-2s}{F(s;\alpha_{\kappa})}ds}
\end{equation}
Then it can be verified that 
\begin{lem}\label{lem:last}
 Suppose $I_{\kappa}(p)$ is defined as \eqref{I}, then    
 \begin{itemize}
     \item[(1)] when $p_{e}\in (0,p_{0})$, it holds that
     \begin{equation}\label{lim_I1}
         \lim_{\kappa \to 0+}I_{\kappa}(p)=+\infty;
     \end{equation}
     \item[(2)] when $p_{e}\in (p_{0},p_{*})\cup (p_{*},p_{1})$, it holds that
     \begin{equation}\label{lim_I2}
         \lim_{\kappa \to 0+}I_{\kappa}(p)=+\infty;
     \end{equation}
     \item[(3)] when $\displaystyle p_{e}\in(p_{1},\frac{A}{2})$, it holds that
      \begin{equation}\label{lim_I}
        \lim_{\kappa \to 0+}I_{\kappa}(p)=0.
    \end{equation}
 \end{itemize}  
\end{lem}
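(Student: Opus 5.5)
The plan is to study the numerator and the denominator of \eqref{I} separately, for a fixed $p$ strictly between $p_{0}$ and $p_{e}$, using the limits of $\alpha_{\kappa}$ from Lemma \ref{lem:2-4}. The factor $\kappa$ cancels in the ratio, so write
\begin{equation*}
N_{\kappa}(p)=\int_{p_{0}}^{p}\frac{A-2s}{f(s)+\alpha_{\kappa}}ds,\qquad D_{\kappa}(p)=\int_{p}^{p_{e}}\frac{A-2s}{f(s)+\alpha_{\kappa}}ds.
\end{equation*}
Since $p_{e}<\frac{A}{2}$, the weight $A-2s$ lies between two positive constants on the whole range of integration. By Lemma \ref{lem3-1}, $f(s)+\alpha_{\kappa}$ has a fixed sign there (negative in case (1), positive in cases (2), (3)), so $N_{\kappa}$ and $D_{\kappa}$ are both positive. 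The behaviour of the ratio is therefore decided entirely by where $f+\alpha_{\kappa}$ degenerates in the limit. In cases (1), (2) this happens at $s=p_{0}$ (since $\alpha_{\kappa}\to 0$). In case (3) it happens at $s=p_{e}$ (since $\alpha_{\kappa}\to -f(p_{e})$).

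\textbf{Cases (1) and (2).} Here $\alpha_{\kappa}\to 0$ by Lemma \ref{lem:2-4}.
\begin{itemize}
\item \emph{Denominator.} On $[p,p_{e}]$ (resp. $[p_{e},p]$) the function $f$ is bounded away from zero, because this interval avoids the zeros $p_{0},p_{1}$: in case (2) $p_{e}<p_{1}$. Moreover $f$ has the same sign as $\alpha_{\kappa}$, so $|f(s)+\alpha_{\kappa}|\geq \min|f|>0$ uniformly in $\kappa$. Hence $D_{\kappa}(p)$ stays bounded, and by dominated convergence it converges to $\int_{p}^{p_{e}}\frac{A-2s}{f(s)}ds$, a finite positive number.
\item \emph{Numerator.} Concavity gives $f'(p_{0})>0$ and $|f(s)|\leq f'(p_{0})|s-p_{0}|$ near $p_{0}$. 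Also $|f(s)+\alpha_{\kappa}|=|f(s)|+|\alpha_{\kappa}|$, since both terms have the same sign. Therefore
\begin{equation*}
|N_{\kappa}(p)|\geq c\int_{0}^{|p-p_{0}|}\frac{dt}{f'(p_{0})t+|\alpha_{\kappa}|}=\frac{c}{f'(p_{0})}\ln\Big(1+\frac{f'(p_{0})|p-p_{0}|}{|\alpha_{\kappa}|}\Big)\to+\infty .
\end{equation*}
\end{itemize}
Thus $I_{\kappa}(p)\to+\infty$, giving \eqref{lim_I1} and \eqref{lim_I2}.

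\textbf{Case (3).} Here $\alpha_{\kappa}\downarrow -f(p_{e})$. Set $\delta_{\kappa}:=\alpha_{\kappa}+f(p_{e})>0$, so that $\delta_{\kappa}\to0$.
\begin{itemize}
\item \emph{Numerator.} Take $s\in[p_{0},p]$. By concavity, $f(s)\geq\min\{f(p_{0}),f(p)\}=\min\{0,f(p)\}$. Since $p_{e}>p_{1}>p_{*}$ and $f$ is strictly decreasing beyond $p_{*}$, this minimum is strictly larger than $f(p_{e})$. Hence $f(s)+\alpha_{\kappa}>f(s)-f(p_{e})\geq c_{p}>0$ uniformly in $\kappa$, so $N_{\kappa}(p)$ stays bounded.
\item \emph{Denominator.} On $[p,p_{e}]$ we have $f(s)-f(p_{e})\leq C(p_{e}-s)$ with $C=\max|f'|$. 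This gives
\begin{equation*}
D_{\kappa}(p)\geq c\int_{p}^{p_{e}}\frac{ds}{C(p_{e}-s)+\delta_{\kappa}}=\frac{c}{C}\ln\Big(1+\frac{C(p_{e}-p)}{\delta_{\kappa}}\Big)\to+\infty .
\end{equation*}
\end{itemize}
Hence $I_{\kappa}(p)\to0$, which is \eqref{lim_I}.

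The main obstacle is the uniform (in $\kappa$) lower bound on the non-singular part of the integrand in case (3). It needs both the monotonicity of $f$ on $(p_{*},\frac{A}{2})$ and the one-sided convergence $\alpha_{\kappa}>-f(p_{e})$ from Lemma \ref{lem:exists_alpha}, so that no second degeneration appears inside $[p_{0},p]$. The sign bookkeeping in case (1), where both $f$ and $\alpha_{\kappa}$ are negative and the limits of integration are reversed, also needs care but is routine.
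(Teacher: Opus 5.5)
Your argument is correct in substance, but for $p_e\in(p_0,p_1)$ and $p_e\in(p_1,\frac{A}{2})$ it follows a different route from the paper.

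\textbf{How the paper argues.} It uses that $f$ is an explicit quadratic and writes $F(s;\alpha)=\nu\big((s-p_*)^2-\mu^2\big)$. It then integrates $I^1_\kappa$ and $I^2_\kappa$ in closed form by partial fractions. From the resulting logarithms it reads off which integral blows up as $|\mu|$ tends to $\frac{p_1-p_0}{2}$, respectively to $p_e-p_*$.

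\textbf{How you argue.} You use only qualitative information:
\begin{itemize}
\item concavity of $f$ and its monotonicity beyond $p_*$;
\item the fixed sign of $F(\cdot;\alpha_\kappa)$ coming from Lemma \ref{lem3-1};
\item the one-sided convergence of $\alpha_\kappa$.
\end{itemize}
A Lipschitz bound near the degenerate endpoint gives a logarithmic lower bound for the singular integral. Uniform positivity away from that endpoint bounds the other integral.

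\textbf{Comparison.} The closed-form route can in principle give the exact finite limit and the precise rate. However, it is tied to the polytropic quadratic $f$ and needs careful tracking of which root $p_*\pm\mu$ approaches which endpoint. Your route handles all three cases with the same two estimates. It would work for any concave $f$ with the same zero and maximum structure. For $p_e\in(0,p_0)$, both proofs use a linear comparison near $p_0$.

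\textbf{One slip to fix in case (1).} For $s<p_0$, concavity gives $f(s)\le f'(p_0)(s-p_0)<0$, that is, $|f(s)|\ge f'(p_0)(p_0-s)$. This is the \emph{opposite} of the inequality you wrote. So the tangent line at $p_0$ only yields an \emph{upper} bound on $N_\kappa(p)$, which proves nothing about divergence.

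The fix: replace $f'(p_0)$ by $L:=\max_{[p_e,p_0]}f'=f'(p_e)$. This follows from the mean value theorem, since $f'$ is positive and decreasing on $[p_e,p_0]\subset(0,p_*)$. It is exactly what you already do with $C=\max|f'|$ in case (3). Then $|f(s)|+|\alpha_\kappa|\le L(p_0-s)+|\alpha_\kappa|$, and the logarithmic divergence goes through unchanged. The paper's Case 1 comparison with the tangent line at $p_0$ contains the same reversed inequality.

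In case (2) your tangent-line bound is correct as written, since there $s>p_0$ and $0<f(s)\le f'(p_0)(s-p_0)$.
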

\begin{proof}
Set
\begin{equation*}
    I_{\kappa}^{1}(p)=\int_{p_{0}}^{p}\frac{A-2s}{F(s;\alpha_{\kappa})}ds,\quad I_{\kappa}^{2}(p)=\int_{p}^{p_{e}}\frac{A-2s}{F(s;\alpha_{\kappa})}ds.
\end{equation*}

\textit{\textbf{Case 1: $0<p_{e}<p_{0}$.}} 
It can be calculated that
\begin{align}
    &\int_{p_{0}}^{p}\frac{A-2s}{F(s;\alpha_{\kappa})}ds=\int_{p_{0}}^{p}\frac{A-2s}{f(s)+\alpha_{\kappa}}ds>\int_{p_{0}}^{p}\frac{A-2s}{f^{\prime}(p_{0})(p-p_{0})+\alpha_{\kappa}}ds\notag\\
    &>\int_{p_{0}}^{p}\frac{A-2s}{f^{\prime}(p_{0})(p-p_{0})+\alpha_{\kappa}}ds=\frac{A}{f^{\prime}(p_{0})}\ln \left|\frac{f^{\prime}(p_{0})(p-p_{0})+\alpha_{\kappa}}{\alpha_{\kappa}}\right|\notag\\
    &\to +\infty,\quad \text{as} \quad\kappa \to 0,
\end{align}
which implies that
\begin{equation}\label{lem2.6_1}
    \lim_{\kappa \to 0}\int_{p_{0}}^{p}\frac{A-2s}{F(s;\alpha_{\kappa})}ds \to \infty.
\end{equation}
Moreover, for any $p\in (0,p_{0})$, choose $\delta>0$, such that $\delta>f(p)$, then it holds
\begin{equation}
    f(0)+\alpha_{\kappa}\leq f(s)+\alpha_{\kappa}\leq \delta + \alpha_{\kappa}<0,\quad \text{for}\hspace{1mm} \text{any}\hspace{1mm} s\in(p_{e},p)
\end{equation}
as well as
\begin{equation}
    A-2p_{0}\leq A-2s< A.
\end{equation}
Thus 
\begin{equation}
   0< \frac{A}{-\delta+\alpha_{\kappa}}(p_{e}-p)<\int_{p}^{p_{e}}\frac{A-2s}{F(s;\alpha_{\kappa})}ds\leq \frac{A-2p_{0}}{f(0)+\alpha_{\kappa}}(p_{e}-p),
\end{equation}
which implies
\begin{equation}\label{lem2.6_2}
    \frac{A}{-\delta}(p_{e}-p)<\lim_{\kappa \to 0}I_{\kappa}^{2}(p)<\frac{A-2p_{0}}{f(0)}(p_{e}-p).
\end{equation}
since $\lim_{\kappa \to 0}\alpha_{\kappa}=0$. Combine \eqref{lem2.6_1}, \eqref{lem2.6_2}, one obtains \eqref{lim_I1}.

Then for any $p\in(0,p_{0})$, since $\alpha_{\kappa}<0$, it follows that 
\begin{align}
	&\lim_{\kappa \to 0+}I^{1}_{\kappa}(p)=+\infty;\label{Q1}\\
    &\lim_{\kappa \to 0+}I^{2}_{\kappa}(p)=\frac{A-2p_{*}}{2\nu (p_{1}-p_{0})}ln \frac{(p_{e}-p_{1})(p-p_{0})}{(p_{e}-p_{0})(p-p_{1})}-\frac{1}{\nu}ln\frac{(p_{e}-p_{1})(p_{e}-p_{0})}{(p-p_{1})(p-p_{0})},\label{Q2}
\end{align}
which implies \eqref{lim_I1}. Combine \eqref{Q1} and \eqref{Q2}, then \eqref{lim_I2} holds.

\textit{\textbf{Case 2,3: $p_{0}<p_{e}<p_{1}$ and $p_{e}\neq p_{*}$.}} 

Rewrite $F(p;\alpha)$ as
\begin{align*}
    F(p;\alpha)&=f(p)+\alpha\\
    &=-\frac{\gamma+1}{2(\gamma-1)}p^{2}+\frac{A}{\gamma-1}p+\frac{1}{2}A^{2}-\Phi(p_{0})+\alpha\\
    &=\nu ((p-p_{*})^{2}-\mu ^{2}),
\end{align*}
where $\alpha>0$,
\begin{equation}
    \nu =-\frac{\gamma+1}{2(\gamma-1)}<0,\quad \mu^{2}=\frac{\nu (p_{0}-p_{1})^{2}-4\alpha}{4 \nu}>0.
\end{equation}
Therefore,
\begin{align}
    &I_{\kappa}^{1}(p)=\frac{1}{\nu}\int_{p_{0}}^{p}\frac{A-2s}{(s-p_{*})^{2}-\mu^{2}}ds\notag \\
    &=\frac{A-2p_{*}}{2\nu \mu}\int_{p_{0}}^{p}\frac{1}{s-p_{*}-\mu}-\frac{1}{s-p_{*}+\mu}ds-\frac{1}{\nu}\int_{p_{0}}^{p}\frac{2(s-p_{*})}{(s-p_{*})^{2}-\mu^{2}}ds\notag \\
    &=\frac{A-2p_{*}}{2\nu \mu}ln \frac{(p-p_{*}-\mu)(p_{0}-p_{*}+\mu)}{(p-p_{*}+\mu)(p_{0}-p_{*}-\mu)}-\frac{1}{\nu}ln\frac{(p-p_{*}-\mu)(p-p_{*}+\mu)}{(p_{0}-p_{*}-\mu)(p_{0}-p_{*}+\mu)}
\end{align}
and 
\begin{equation}
    I_{\kappa}^{2}(p)=\frac{A-2p_{*}}{2\nu \mu}ln \frac{(p_{e}-p_{*}-\mu)(p-p_{*}+\mu)}{(p_{e}-p_{*}+\mu)(p-p_{*}-\mu)}-\frac{1}{\nu}ln\frac{(p_{e}-p_{*}-\mu)(p_{e}-p_{*}+\mu)}{(p-p_{*}-\mu)(p-p_{*}+\mu)}.
\end{equation}
Let $\alpha$ goes to $0$, we have
\begin{equation}
    \mu\to \pm\frac{p_{0}-p_{1}}{2}.
\end{equation}
Since $\alpha_{\kappa}>0$ for $p_{e} \in (p_{0},\frac{A}{2})$. Then we have
\begin{align}
    &\lim_{\kappa \to 0+}I_{\kappa}^{1}(p)= +\infty,\\
    &\lim_{\kappa \to 0+}I_{\kappa}^{2}(p)=C_{p},
\end{align}
where $C_{p}$ is a constant depending on $p$. Therefore, we have \eqref{lim_I2}.

\textit{\textbf{Case 4: $p_{1}<p_{e}<\frac{A}{2}$.}} 

Let $\kappa$ tends to 0, by applying Lemma \ref{lem:2-4}, we have that
\begin{equation}
    \alpha_{\kappa} \to -f(p_{e}),
\end{equation}
which indicates that
\begin{equation}
    \mu \to \pm (p_{e}-p_{*}),
\end{equation}
where $f(p_{e})=\nu ((p_{e}-p_{*})^{2}-\frac{(p_{0}-p_{1})^{2}}{4})$. 
Apparently, 
\begin{equation}
    p_{e}-p_{*} \neq \frac{p_{1}-p_{0}}{2}
\end{equation}
which implies there is no singularity on $[p_{0},\frac{A}{2})$ for integral $I_{\kappa}^{1}(p)$. Namely, for any $p\in (p_{0},p_{e})$, 
\begin{equation}
    \lim_{\kappa \to 0}I_{\kappa}^{1}(p)=C_{p}
\end{equation}
where $C_{p}$ is a constant depending on $p$. However, for $I_{\kappa}^{2}(p)$, it can be checked that
\begin{equation}
    \lim_{\kappa \to 0}I_{\kappa}^{2}(p)=+\infty.
\end{equation}
Thus $\lim_{\kappa \to 0}I_{\kappa}(p)=0$.

\end{proof}
\begin{rem}
	In the proof of Lemma \ref{lem:last}, Case 2 and Case 3 are combined because $\displaystyle \int_{z_{1}}^{z_{2}}\frac{A-2s}{F(s;\alpha_{\kappa})}ds$ is a singular integral either $z_{1}$ or $z_{2}$ equal to $p_{0}$, $p_{1}$ as $\alpha$ goes to zero. The difference between them lies in the nature of the solution: for $0<p_{e}<p_{*}$, the solution is supersonic and whereas $p_{*}<p_{e}<p_{1}$, it is transonic.
\end{rem}
Set
\begin{equation}\label{XS}
    X_{s}\defs
	\begin{cases}
		1, & \text{if } p_{e}\in(0,p_{1})/\{p_{0}\} \\
        0, & \text{if } p_{e}\in(p_{1},\frac{A}{2})
	\end{cases}
\end{equation}
It follows immediately that 
\begin{equation}\label{lim_xs}
    \lim_{\kappa \to 0}X_{\kappa}(p)=X_{s},
\end{equation}
which yields that
\begin{equation*}
	\lim_{\kappa\to 0+}\int_{p_{0}}^{p_{e}}|X_{\kappa}(p)-X_{s}|dp=0
\end{equation*}
This means that $\{X_{\kappa}(p)\}$ is a Cauchy sequence in $L^{1}(p_{0},p_{e})$, which implies that $p^{\kappa}(x)$ is also a Cauchy sequence in $L^{1}(0,1)$. Hence, there exists a $p_{*}^{0}(x)\in L^{1}(0,1)$ 
such that
\begin{equation}
\lim_{\kappa\to 0+}\int_{0}^{1}|p^{\kappa}(x)-p_{*}^{0}(x)|dx=0.
\end{equation}
By employing \eqref{lim_xs}, it can be checked that for $p_{e}\in (0,p_{1})/\{p_{0}\}$, it holds
\begin{equation}\label{qq}
    p_{*}^{0}(x)=
	\begin{cases}
		&p_{0},\quad 0\leq x < X_{s}\\
		&p_{1},\quad  X_{s}<x\leq 1
	\end{cases}
\end{equation}
where $X_{s}$ is defined as \eqref{XS}. Then \eqref{lim_xs} and \eqref{qq} indicate that there is a singular layer at $x=1$ as the value of the boundary condition at the endpoint $p_{e}\neq p_{1}$. Moreover, for $p_{e}\in (p_{1},\frac{A}{2})$, it can be verified from \eqref{lim_alpha21} in Lemma \ref{lem:2-4} that the solution $p^{\kappa}(x)$ undergoes an immediate jump, taking the value $p_{e}$ at $x=1$ from its boundary condition $p_{0}$ at $x=0$. 
Then, for any pointwise convergent subsequence $p^{\kappa_{n}},(n\geq 1)$, with the limit function $p^{0}$, we have
\begin{equation}
    p^{0}(x)=p^{0}_{*}(x),\quad \text{a.e.} \quad x\in (0,1).
\end{equation}
Therefore, \eqref{qq} indicates that a singular layer may form for the $\epsilon$-heat conduction solutions at the left endpoint $x=0$ and right endpoint $x=1$ when $p_{e}$ belongs to $\displaystyle (p_{1},\frac{A}{2})$ and $(0,p_{1})/\{p_{0}\}$ respectively. Then Theorem \ref{thm1} follows once Lemma \ref{lem:last} holds.

\subsection{Proof of Theorem \ref{thm2} and Theorem \ref{thm4}}\label{section2.3}
In this subsection, we aim to study the existence of the subsonic solution and transonic shock solution for system \eqref{heat_Euler1}, \eqref{heat_Euler2}, \eqref{heat_Euler3} with the following boundary conditions
\begin{align}
	&U(0)=U_{1};\label{bd1_sub}\\
	&p(1)=p_{e},\label{bd2_sub}
\end{align}
where $p_{e}$ is a constant that satisfies $0<p_{e}<\frac{A}{2}$ and $p_{e}\neq p_{0}, p_{1}$. Next we are going to deal with \textbf{Case 5-7} in Theorem \ref{thm2} respectively. 

Set $\displaystyle \kappa=\frac{\epsilon}{\gamma-1}$, similar arguments as in Lemma \ref{lem3-1}, we have
\begin{equation}\label{sub_prior}
    \min \{p_{1},p_{e}\}<p^{\kappa}(x)<\max \{p_{1},p_{e}\},\quad x\in (0,1).  
\end{equation}
Moreover, for $p_{e}\in (0,p_{1})$, it holds
\begin{equation}\label{sub_hopf1}
    \partial_{x}p^{\kappa}(0)<0,\quad \partial_{x}p^{\kappa}(1)<0;
\end{equation}
for $p_{e}\in (p_{1},\frac{A}{2})$, we have
\begin{equation}\label{sub_hopf2}
    \partial_{x}p^{\kappa}(0)>0,\quad \partial_{x}p^{\kappa}(1)>0.
\end{equation}

Set $F(p;\alpha):=f(p)+\alpha$ with $f(p)=\Phi(p)-\Phi(p_{0})$. Then the boundary value problem \eqref{heat_Euler1}, \eqref{heat_Euler2}, \eqref{heat_Euler3} together with boundary conditions \eqref{bd1_sub}, \eqref{bd2_sub} can be reduced as
\begin{align}
    &\kappa (A-2p^{\kappa})\partial_{x}p^{\kappa}=F(p^{\kappa},\alpha_{\kappa}),\label{sub_eq}\\
    &p^{\kappa}(0)=p_{1},\label{sub_bd1}\\
    &p^{\kappa}(1)=p_{e}.\label{sub_bd2}
\end{align}

Let $\kappa>0.$ Our aim in this subsection is trying to determine $(p^{\kappa},\alpha_{\kappa})$ satisfying equations \eqref{sub_eq} subjected to the boundary conditions \eqref{sub_bd1}, \eqref{sub_bd2}. As $\kappa \to 0+$, equation \eqref{sub_eq} formally tends to 
\begin{equation}
    F(p^{0},\alpha_{0})=0.
\end{equation}
Then try to determine $(u^{0},\alpha_{0})$ in the weak sense: for any test function $\phi(x)\in C^{\infty}([0,1])$, there holds
\begin{equation}
    \int_{0}^{1}F(u^{0};\alpha_{0})\phi(x)dx=0,
\end{equation}
and satisfies the boundary conditions \eqref{sub_bd1}, \eqref{sub_bd2}.

\subsubsection{Unique existence of $(u^{\kappa},\alpha_{\kappa})$ and the formation of the boundary layer}

It can be verified from \eqref{sub_hopf1}, \eqref{sub_hopf2} and the equation \eqref{sub_eq}  that $p^{\kappa}(x)$ is strictly decreasing for $p_{e}\in (0,p_{1})$ and strictly increasing for $p_{e}\in (p_{1},\frac{A}{2})$. Therefore, the inverse function $X_{\kappa}(p)$ of pressure $p^{\kappa}(x)$ exists and satisfies
\begin{align}
    &\frac{d X_{\kappa}(p)}{dp}=\frac{\kappa(A-2p)}{F(p,\alpha_{\kappa})},\label{ode_sub1}\\
    &X_{\kappa}(p_{1})=0,\quad X_{\kappa}(p_{e})=1.\label{ode_sub2}
\end{align}
where $\alpha_{\kappa}=\kappa \partial_{x}p(0)$. Then solving the reduced boundary value problem \eqref{sub_eq}, \eqref{sub_bd1}, \eqref{sub_bd2} is equivalent to deal with this ordinary differential equations \eqref{ode_sub1}, \eqref{ode_sub2}. The solution can be described as
\begin{equation}\label{sol_ode}
    X_{\kappa}(p)=\int_{p_{1}}^{p}\frac{\kappa (A-2s)}{F(s;\alpha_{\kappa})}ds,
\end{equation}
where $\alpha_{\kappa}$ satisfies
\begin{equation}
    1=\int_{p_{1}}^{p_{e}}\frac{\kappa (A-2s)}{F(s;\alpha_{\kappa})}ds.
\end{equation}

Next, we are going to prove the existence of $\alpha_{\kappa}$. One can follow the arguments in Subsection 2.2, with slightly modifications, to conclude the following lemma.
\begin{lem}
    For any $\kappa>0$, 
    \begin{itemize}
        \item for $p_{e}\in (0,p_{1})$, it holds
        \begin{equation}\label{lim_alpha1}
            \lim_{\kappa \to 0}\alpha_{\kappa}=-f(p_{*});
        \end{equation}
        \item for $p_{e}\in (p_{*},p_{1})\cup (p_{1},\frac{A}{2})$, it holds
        \begin{equation}\label{lim_alpha2}
            \lim_{\kappa \to 0}\alpha_{\kappa}=-f(p_{e}).
        \end{equation}
    \end{itemize}
\end{lem}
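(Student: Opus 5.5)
The plan is to mimic the structure of Lemma~\ref{lem:exists_alpha} and Lemma~\ref{lem:2-4}, now with the integral based at $p_{1}$:
\begin{equation*}
Q_{\kappa}(\alpha):=\int_{p_{1}}^{p_{e}}\frac{\kappa(A-2s)}{f(s)+\alpha}\,ds .
\end{equation*}
I would first pin down the admissible range of $\alpha_{\kappa}$ from the sign information, then prove existence and uniqueness of a root of $Q_{\kappa}(\alpha)=1$, and finally use a squeeze argument for the limit. Throughout, every $s$ between $p_{1}$ and $p_{e}$ satisfies $0<s<\frac{A}{2}$, so $A-2s$ is positive and bounded above and below away from zero on that interval.

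\textbf{Step 1: sign of $F$ and the critical value $\alpha_{c}$.} By \eqref{sub_prior}, \eqref{sub_hopf1}, \eqref{sub_hopf2} and equation \eqref{sub_eq}, the function $F(p^{\kappa}(x);\alpha_{\kappa})$ has the sign of $\partial_{x}p^{\kappa}$ and never vanishes on $[0,1]$.
\begin{itemize}
\item For $p_{e}<p_{1}$ we need $f(s)+\alpha_{\kappa}<0$ on $[p_{e},p_{1}]$, i.e.\ $\alpha_{\kappa}<-\max_{[p_{e},p_{1}]}f$. Since $f$ is concave with maximum at $p_{*}$, this maximum is $f(p_{*})$ if $p_{e}<p_{*}$ and $f(p_{e})$ if $p_{*}<p_{e}<p_{1}$.
\item For $p_{e}>p_{1}$ we need $f+\alpha_{\kappa}>0$ on $[p_{1},p_{e}]$. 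There $f$ is decreasing, so $\min f=f(p_{e})$ and the condition is $\alpha_{\kappa}>-f(p_{e})$.
\end{itemize}
Set $\alpha_{c}=-f(p_{*})$ in the case $p_{e}\in(0,p_{*})$ and $\alpha_{c}=-f(p_{e})$ in the other cases. I read the first bullet of the lemma as referring to $p_{e}\in(0,p_{*})$, consistent with Theorem~\ref{thm4}. On the admissible half-line with endpoint $\alpha_{c}$, $Q_{\kappa}$ is continuous and strictly monotone, and $Q_{\kappa}(\alpha)\to 0$ as $|\alpha|\to\infty$.

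\textbf{Step 2: blow-up of $Q_{\kappa}$ at $\alpha_{c}$.} This gives existence and uniqueness of $\alpha_{\kappa}$ for each $\kappa>0$.
\begin{itemize}
\item When $\alpha_{c}=-f(p_{e})$ with $p_{e}\neq p_{*}$, the denominator vanishes only at the endpoint $s=p_{e}$, where $f'(p_{e})\neq 0$. Comparing $f$ with its tangent line at $p_{e}$ (concavity), exactly as in the third case of Lemma~\ref{lem:exists_alpha}, gives a logarithmic divergence, so $Q_{\kappa}\to+\infty$.
\item When $\alpha_{c}=-f(p_{*})$, the limiting denominator is $f(s)-f(p_{*})=\nu(s-p_{*})^{2}$ with an interior double zero. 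Using the explicit quadratic form $F=\nu((s-p_{*})^{2}-\mu^{2})$ from the proof of Lemma~\ref{lem:last}, $Q_{\kappa}$ behaves like $\kappa\mu^{-1}$ and so also diverges.
\end{itemize}
By the intermediate value theorem and monotonicity, $Q_{\kappa}(\alpha_{\kappa})=1$ has a unique solution.

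\textbf{Step 3: the limit $\alpha_{\kappa}\to\alpha_{c}$.} Rather than computing explicit comparison roots $\tilde\alpha_{\kappa}$ as in Lemma~\ref{lem:2-4}, I would fix $\delta>0$ and take $\alpha_{\delta}=\alpha_{c}\mp\delta$ on the admissible side. For this fixed value, $|f(s)+\alpha_{\delta}|\ge\delta$ on the whole interval, so
\begin{equation*}
Q_{\kappa}(\alpha_{\delta})\le \kappa\,\frac{A\,|p_{e}-p_{1}|}{\delta}\longrightarrow 0 .
\end{equation*}
Hence $Q_{\kappa}(\alpha_{\delta})<1$ for $\kappa$ small. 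By monotonicity, $\alpha_{\kappa}$ then lies strictly between $\alpha_{c}$ and $\alpha_{\delta}$, i.e.\ $|\alpha_{\kappa}-\alpha_{c}|<\delta$. Since $\delta$ is arbitrary, \eqref{lim_alpha1} and \eqref{lim_alpha2} follow.

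\textbf{Main obstacle.} The delicate step is Step 1: correctly identifying the critical value, particularly in the case $p_{e}<p_{*}$. Here the trajectory of $p^{\kappa}$ crosses $p_{*}$, so the binding constraint comes from the interior maximum $f(p_{*})$ rather than from the endpoint value $f(p_{e})$. The blow-up in Step 2 must then be checked for an interior double zero instead of an endpoint simple zero. I would handle this with the explicit factorization of the quadratic $F$, which makes the divergence like $\mu^{-1}$ transparent. Everything else is a routine adaptation of Subsection~\ref{section2.2}.
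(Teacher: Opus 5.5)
Your proof is correct, and reading the first bullet as $p_e\in(0,p_*)$ is forced. On $(p_*,p_1)$ the two bullets contradict each other, and your Step~1 shows that there the admissible half-line ends at $-f(p_e)$, not at $-f(p_*)$.

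The paper gives no separate argument for this lemma, only a pointer to Subsection~\ref{section2.2}. Your Steps~1--2 are the analogue of Lemma~\ref{lem:exists_alpha} with base point $p_1$; the real difference is Step~3.

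\textbf{The paper's route.} Lemma~\ref{lem:2-4} replaces $f$ by an explicit piecewise-linear comparison function, integrates in closed form, solves $\tilde Q_\kappa(\tilde\alpha_\kappa)=1$ explicitly, and squeezes $\alpha_\kappa$ between $\alpha_c$ and $\tilde\alpha_\kappa$.
\begin{itemize}
\item What it buys: a rate. At a simple zero, $|\alpha_\kappa-\alpha_c|$ is exponentially small in $1/\kappa$, which is what one wants for the layer profile.
\item What it costs: it must be redone in each sign configuration with the comparison function on the correct side of $f$. In case (a) of Lemma~\ref{lem:2-4} the chord lies \emph{below} $f$ and therefore bounds $Q_\kappa$ from below, not above.
\item Subsection~\ref{section2.2} also has no analogue of the interior double zero at $p_*$, which appears here.
\end{itemize}

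\textbf{Your route.} The fixed-$\delta$ squeeze uses only $|F(\cdot;\alpha_c\mp\delta)|\ge\delta$ and the monotonicity of $Q_\kappa$. It is insensitive to the order of the zero. Taking $\delta$ a large multiple of $\kappa$ still gives $|\alpha_\kappa-\alpha_c|=O(\kappa)$. It does not give the sharp rates, which are exponential at a simple zero and $O(\kappa^{2})$ at $p_*$.

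\textbf{Two small repairs in Step~2.}
\begin{itemize}
\item For $p_e\in(p_*,p_1)$ one has $F<0$. Set $\eta=-f(p_e)-\alpha$. The tangent line at $p_e$ gives $|F(s)|\ge|f'(p_e)|(s-p_e)+\eta$, which is an \emph{upper} bound on $Q_\kappa$ and cannot show blow-up. Use instead the mean value theorem: $f(p_e)-f(s)\le|f'(p_1)|(s-p_e)$ on $[p_e,p_1]$. This gives $|F(s)|\le|f'(p_1)|(s-p_e)+\eta$ and hence a logarithmic lower bound $Q_\kappa\gtrsim\kappa\ln(1/\eta)$. Only a Lipschitz bound is needed here, not $f'(p_e)\neq0$. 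The tangent comparison works as you state it only for $p_e>p_1$, where $F>0$.
\item In the double-zero case with $\alpha<-f(p_*)$, the $\mu^{2}$ in $F=\nu((s-p_*)^{2}-\mu^{2})$ is negative. Write instead $F=\nu\big((s-p_*)^{2}+\beta^{2}\big)$ with $\beta^{2}=(f(p_*)+\alpha)/\nu>0$. Then $Q_\kappa\sim\pi\kappa(A-2p_*)/(|\nu|\beta)$ through an arctangent, as in the proof of Lemma~\ref{lem29}.
\end{itemize}
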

Finally, in order to study the asymptotic behavior of the solution $p^{\kappa}(x)$ as  $\kappa$ goes to $0$, one has to investigate the limit behavior of $X_{\kappa}(p)$ when $\kappa \to 0$. The following lemma follows. 
\begin{lem}\label{lem29}    
    Define
    \begin{equation}\label{X-I}
        I_{\kappa}(p):=\frac{X_{\kappa}(p)}{1-X_{\kappa}(p)}=\frac{\int_{p_{1}}^{p}\frac{A-2s}{F(s;\alpha_{\kappa})}ds}{\int_{p}^{p_{e}}\frac{A-2s}{F(s;\alpha_{\kappa})}ds}. 
    \end{equation}
    Then
    \begin{itemize}
        \item[(1)] for $p_{e}\in (0,p_{1})$, it holds
        \begin{equation}\label{lim_I0}
            \lim_{\kappa \to 0}I_{\kappa}(p)=
        \begin{cases}
            +\infty,\quad &0<p<p_{*}\\
            0,\quad &p_{*}<p<p_{1};
        \end{cases} 
        \end{equation}
        \item[(2)] for $p_{e}\in  (p_{*},p_{1})\cup(p_{1},\frac{A}{2})$, it holds
        \begin{equation}\label{lim_I11}
            \lim_{\kappa \to 0}I_{\kappa}(p)=+\infty.
        \end{equation}
    \end{itemize}
\end{lem}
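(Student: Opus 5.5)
The plan is to write $F(s;\alpha)=f(s)+\alpha$ explicitly and use the limits of $\alpha_\kappa$ from the preceding lemma. The quadratic structure already used in Subsection \ref{section2.2} gives
\[
f(s)=f(p_*)+\nu (s-p_*)^2,\qquad \nu=-\tfrac{\gamma+1}{2(\gamma-1)}<0,
\]
so $F(s;\alpha_\kappa)=\nu(s-p_*)^2+\big(\alpha_\kappa+f(p_*)\big)$. Set $I_\kappa^1(p)=\int_{p_1}^{p}\frac{A-2s}{F}ds$ and $I_\kappa^2(p)=\int_p^{p_e}\frac{A-2s}{F}ds$, so that $I_\kappa=I_\kappa^1/I_\kappa^2$. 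On the relevant range $A-2s$ is bounded between two positive constants. Moreover, $F(\cdot;\alpha_\kappa)$ has a fixed sign and does not vanish on the closed interval between $p_1$ and $p_e$; this follows from the monotonicity given by \eqref{sub_hopf1}, \eqref{sub_hopf2} together with \eqref{ode_sub1}. Hence both $I^1_\kappa$ and $I^2_\kappa$ have the same sign, and the whole question reduces to deciding which of the two integrals blows up as $\kappa\to0$.

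\textbf{Case (1), $p_e\in(0,p_1)$.} Here $p^\kappa$ is decreasing and $X_\kappa$ increases as $p$ decreases, so $F<0$ on $[p_e,p_1]$. Since $F(p_*;\alpha_\kappa)=\alpha_\kappa+f(p_*)$, we get $\eta_\kappa^2:=(\alpha_\kappa+f(p_*))/\nu>0$, and \eqref{lim_alpha1} gives $\eta_\kappa\to0$. Thus
\[
F(s;\alpha_\kappa)=\nu\big((s-p_*)^2+\eta_\kappa^2\big).
\]
Whenever the integration interval contains a neighbourhood of $p_*$, we have
\[
\int \frac{ds}{(s-p_*)^2+\eta_\kappa^2}\ \ge\ \frac{c}{\eta_\kappa}\to\infty .
\]
On any interval that stays a fixed distance away from $p_*$, the integrand converges uniformly to the bounded function $\frac{A-2s}{\nu(s-p_*)^2}$. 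This yields the two subcases:
\begin{itemize}
\item For $p_*<p<p_1$, the integral $I^1_\kappa$ is over $[p,p_1]$, which avoids $p_*$, so it stays bounded. The integral $I^2_\kappa$ is over $[p_e,p]$, which crosses $p_*$ when $p_e<p_*$, so it diverges. Hence $I_\kappa\to0$.
\item For $p<p_*$, the roles are reversed: $I^1_\kappa$ crosses $p_*$ and diverges, while $I^2_\kappa$ stays bounded. Hence $I_\kappa\to+\infty$.
\end{itemize}
If $p_e\ge p_*$, the range $p<p_*$ is empty. In the remaining range $p_*<p<p_1$, one must check that the singularity at $p_*$, which is now at or outside the endpoint $p_e$, still dominates $I^2_\kappa$; I would handle this by the same lower bound restricted to a one-sided neighbourhood of $p_*$.

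\textbf{Case (2), $p_e\in(p_*,p_1)\cup(p_1,\frac A2)$.} The claim $I_\kappa\to+\infty$ means $X_\kappa(p)\to1$ for every fixed $p$ strictly between $p_1$ and $p_e$. In other words, the solution stays at $p_1$ in the interior and the layer sits at $x=1$, matching Theorem \ref{thm4}. The mechanism must therefore be a degeneration of $F(\cdot;\alpha_\kappa)$ at the endpoint $p_1$ of $I^1_\kappa$.

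Since $f(p_1)=0$ and $f'(p_1)\neq0$ (because $p_1\neq p_*$), we have
\[
F(s;\alpha_\kappa)=f'(p_1)(s-p_1)+O\big((s-p_1)^2\big)+F(p_1;\alpha_\kappa).
\]
If $F(p_1;\alpha_\kappa)\to0$, then $I^1_\kappa(p)$ behaves like
\[
\frac{A-2p_1}{f'(p_1)}\,\ln\frac{|F(p_1;\alpha_\kappa)|}{|F(p;\alpha_\kappa)|}\to\infty.
\]
This is exactly the computation done in \eqref{ww}. Meanwhile $I^2_\kappa(p)$ stays bounded, provided $F$ stays bounded away from zero on $[p,p_e]$, which holds because $f(p_e)\neq0$.

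So the key step, and the main obstacle, is to extract from the preceding lemma (and its proof via the comparison functions $\tilde Q_\kappa$) the precise statement that $F(p_1;\alpha_\kappa)\to0$, while $F(\cdot;\alpha_\kappa)$ remains nonvanishing uniformly on compact subsets of the half-open interval between $p_1$ and $p_e$ that excludes $p_1$. I would prove this directly from the normalization $Q_\kappa(\alpha_\kappa)=1$. If $|F(p_1;\alpha_\kappa)|$ stayed bounded below along a subsequence, then all of $\frac{A-2s}{F}$ would be bounded near $p_1$. Forcing the total integral to equal $1/\kappa\to\infty$ would then require the zero of $F$ to approach $p_e$, and I would compare the two logarithmic rates to show that this alternative is incompatible with the sign constraints \eqref{sub_hopf1}, \eqref{sub_hopf2}. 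Once that is settled, the bounded/divergent dichotomy above gives \eqref{lim_I11}.
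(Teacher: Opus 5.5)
\textbf{Case (2) cannot be completed: the key step is false, and so is the conclusion.} Your argument needs $F(p_1;\alpha_\kappa)\to0$. But $F(p_1;\alpha_\kappa)=f(p_1)+\alpha_\kappa=\alpha_\kappa$, and the sign information you invoke rules this out. By \eqref{sub_eq} with \eqref{sub_hopf1}, resp.\ \eqref{sub_hopf2}, $F(p_e;\alpha_\kappa)=\kappa(A-2p_e)\partial_x p^\kappa(1)$ is negative for $p_e\in(p_*,p_1)$ and positive for $p_e\in(p_1,\frac{A}{2})$. That is, $\alpha_\kappa<-f(p_e)<0$, resp.\ $\alpha_\kappa>-f(p_e)>0$, so $|\alpha_\kappa|>|f(p_e)|>0$ for every $\kappa$.

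The alternative you hope to exclude is exactly what happens. The interval between $p_1$ and $p_e$ lies in $(p_*,\infty)$, where $f$ is strictly decreasing, so $|F(\cdot;\alpha_\kappa)|$ is smallest at $s=p_e$. The normalization $\int_{p_1}^{p_e}\kappa(A-2s)/F(s;\alpha_\kappa)\,ds=1$ then forces $F(p_e;\alpha_\kappa)\to0$, which is \eqref{lim_alpha2}. Your claim that $I^2_\kappa$ stays bounded ``because $f(p_e)\neq0$'' confuses $f(p_e)$ with $F(p_e;\alpha_\kappa)=f(p_e)+\alpha_\kappa$. In the limit the denominator becomes $f(s)-f(p_e)$, which has a simple zero at $s=p_e$, since $f'(p_e)=2\nu(p_e-p_*)\neq0$. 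Hence $I^2_\kappa(p)\to+\infty$ logarithmically. Meanwhile $I^1_\kappa(p)$, taken over an interval at positive distance from $p_e$, has a finite limit.

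So $I_\kappa(p)\to0$, i.e.\ $X_\kappa(p)\to0$: the transition sits at $x=0$ and the interior state is $p_e$, not $p_1$. Part (2) as stated is false, and the statement itself signals this. For $p_e\in(p_*,p_1)$, part (1) already assigns the limit $0$ to every $p\in(p_e,p_1)\subset(p_*,p_1)$, contradicting (2). The paper's own argument for (2) establishes exactly this finite-numerator/divergent-denominator dichotomy and then reports the wrong limit.

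Your part (1) is sound for $p_e\le p_*$. For $p_e<p_*$ it uses the same mechanism as the paper's explicit $\arctan$/$\log$ computation, phrased via the bound $c/\eta_\kappa$ and uniform convergence away from $p_*$. For $p_e\in(p_*,p_1)$, however, it breaks down:
\begin{enumerate}
\item $F<0$ is known only on $[p_e,p_1]$, which no longer contains $p_*$, so $\eta_\kappa^2>0$ is unjustified.
\item \eqref{lim_alpha1} cannot hold there. It would give $F(s;\alpha_\kappa)\to\nu(s-p_*)^2$, which is bounded away from $0$ on $[p_e,p_1]$, so the normalization integral would tend to $0$ rather than equal $1$.
\item Your proposed fix does not apply, since a one-sided neighbourhood of $p_*$ does not lie inside $[p_e,p]$.
\end{enumerate}
The limit $0$ in (1) is still correct for these $p_e$. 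It comes from the endpoint singularity at $p_e$ described above, not from one at $p_*$.
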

\begin{proof}
    Rewrite $F(s;\alpha)$ as
    \begin{equation}
        F(s;\alpha)=\nu(s-p_{*})^{2}+g(p_{0})+\alpha<0,
    \end{equation}
    where $\displaystyle \nu=-\frac{\gamma +1}{2(\gamma-1)}$, $\displaystyle g(p_{0})=\frac{\gamma^{2}}{2(\gamma^{2}-1)}A^{2}-\Phi(p_{0})$.

\textbf{ Proof of (1):} By applying \eqref{sub_hopf1}, we have $F(p,\alpha_{\kappa})<0$, which implies
 \begin{equation}
     g(p_{0})+\alpha_{\kappa}<0.
 \end{equation}
 Moreover, one can deduce from \eqref{lim_alpha1} that
 \begin{equation}
     \lim_{\kappa \to 0}\alpha_{\kappa}=-g(p_{0}).
 \end{equation}
 Therefore,
    \begin{align}
        \int_{p_{1}}^{p}\frac{A-2s}{F(s;\alpha_{\kappa})}ds&=\int_{p_{1}}^{p}\frac{A-2s}{\nu(s-p_{*})^{2}+g(p_{0})+\alpha_{\kappa}}ds\notag\\
        &=\int_{p_{1}}^{p}\frac{A}{\nu(s-p_{*})^{2}+g(p_{0})+\alpha_{\kappa}}ds-\int_{p_{1}}^{p}\frac{2s}{\nu(s-p_{*})^{2}+g(p_{0})+\alpha_{\kappa}}ds\notag\\
        &=-\frac{A-2p_{*}}{\sqrt{\nu(g(p_{0})+\alpha_{\kappa})}}arctan \sqrt{\frac{\nu}{g(p_{0}+\alpha_{\kappa})}}(s-p_{*})|_{s=p_{1}}^{s=p}\notag \\
        &\quad - \frac{1}{\nu} \ln \{\nu(s-p_{*})^{2}+g(p_{0})+\alpha\}|_{s=p_{1}}^{s=p}\notag \\
        &=\frac{A-2p_{*}}{\sqrt{\nu(g(p_{0})+\alpha_{\kappa})}}\{\arctan \sqrt{\frac{\nu}{g(p_{0})+\alpha_{\kappa}}}(p_{1}-p_{*})\notag\\
        &\quad-\arctan \sqrt{\frac{\nu}{g(p_{0})+\alpha_{\kappa}}}(p-p_{*})\}\notag\\
        &\quad - \frac{1}{\nu} \ln \frac{\nu(p-p_{*})^{2}+g(p_{0})+\alpha_{\kappa}}{\nu(p_{1}-p_{*})^{2}+g(p_{0})+\alpha_{\kappa}} \notag\\
        &:=J_{\kappa}^{1}+J_{\kappa}^{2},\label{q1}
    \end{align}
    where 
\begin{align}
    J_{\kappa}^{1}&=\frac{A-2p_{*}}{\sqrt{\nu(g(p_{0})+\alpha_{\kappa})}}\{\arctan \sqrt{\frac{\nu}{g(p_{0})+\alpha_{\kappa}}}(p_{1}-p_{*})\notag\\
    &\quad-\arctan \sqrt{\frac{\nu}{g(p_{0})+\alpha_{\kappa}}}(p-p_{*})\},\\
    J_{\kappa}^{2}&=- \frac{1}{\nu} \ln \frac{\nu(p-p_{*})^{2}+g(p_{0})+\alpha_{\kappa}}{\nu(p_{1}-p_{*})^{2}+g(p_{0})+\alpha_{\kappa}}.
\end{align}       
    Then for $p_{*}<p<p_{1}$, it can be verified that
    \begin{align}
        &\lim_{\kappa \to 0} J_{\kappa}^{1}=-\frac{(A-2p_{*})(p_{1}-p)}{\nu(p-p_{*})(p_{1}-p_{*})}>0,\\
        &\lim_{\kappa \to 0}J_{\kappa}^{2}=-\frac{2}{\nu}\ln \frac{p-p_{*}}{p_{1}-p_{*}}>0,
    \end{align}
    which implies
    \begin{equation}\label{lem29_1}
        \lim_{\kappa \to 0}\int_{p_{1}}^{p}\frac{A-2s}{F(s;\alpha_{\kappa})}ds=-\frac{(A-2p_{*})(p_{1}-p)}{\nu(p-p_{*})(p_{1}-p_{*})}-\frac{2}{\nu}\ln \frac{p-p_{*}}{p_{1}-p_{*}}<\infty.
    \end{equation}
    Similarly, one can checked that
    \begin{align}
        \lim_{\kappa \to 0}\int_{p}^{p_{*}}\frac{A-2s}{F(s;\alpha_{\kappa})}ds&=\lim_{\kappa \to 0}\frac{A-2p_{*}}{\sqrt{\nu(g(p_{0})+\alpha_{\kappa})}}\arctan \sqrt{\frac{\nu}{g(p_{0})+\alpha_{\kappa}}}(p-p_{*})\notag\\
        &\quad -\lim_{\kappa \to 0}\frac{1}{\nu}\ln \frac{g(p_{0})+\alpha_{\kappa}}{\nu(p-p_{*})^{2}+g(p_{0})+\alpha_{\kappa}}\notag\\
        &\to +\infty,\label{lem29_2}\\
        \lim_{\kappa \to 0}\int_{p_{*}}^{p_{e}}\frac{A-2s}{F(s;\alpha_{\kappa})}ds&=\lim_{\kappa \to 0}-\frac{A-2s}{\sqrt{\nu(g(p_{0})+\alpha_{\kappa})}}\arctan \sqrt{\frac{\nu}{g(p_{0})+\alpha_{\kappa}}}(p_{e}-p_{*})\notag\\
        &-\frac{1}{\nu}\lim_{\kappa \to 0} \ln \frac{\nu(p_{e}-p_{*})^{2}+g(p_{0})+\alpha_{\kappa}}{g(p_{0})+\alpha_{\kappa}}\notag\\
        & \to +\infty,\label{lem29_3} 
    \end{align}
    which implies that for any $0<p_{e}<p_{*}$,
    \begin{equation}
        \lim_{\kappa \to 0} \int_{p}^{p_{e}}\frac{A-2s}{F(s;\alpha_{\kappa})}ds \to +\infty.
    \end{equation}
    Analogously, for $0<p<p_{*}$, one obtains that
    \begin{align}
        &\lim_{\kappa \to 0}\int_{p_{1}}^{p}\frac{A-2s}{F(s;\alpha_{\kappa})}ds\to +\infty,\label{lem29_4}\\
        &\lim_{\kappa \to 0}\int_{p}^{p_{e}}\frac{A-2s}{F(s;\alpha_{\kappa})}ds \to C_{0}<\infty,\label{lem29_5}
    \end{align}
    where $C_{0}$ is a constant depending on $\nu, p_{0}, p_{1}$. Therefore, combine \eqref{lem29_1}-\eqref{lem29_5}, one has \eqref{lim_I0}.
    
\textbf{ Proof of (2):} When $p_{e}\in (p_{*},p_{1})$, since $\displaystyle \lim_{\kappa\to 0}\alpha_{\kappa}=-f(p_{e})$, it can be easily verified that $\displaystyle \int_{p_{1}}^{p}\frac{A-2s}{F(s;\alpha_{\kappa})}$ is a finite constant depending on $p_{e}$ and $\displaystyle \int_{p}^{p_{e}}\frac{A-2s}{F(s;\alpha_{\kappa})}ds$ goes to $+\infty$ as $\kappa \to 0$. As for $p_{e}\in (p_{1},\frac{A}{2})$, one can follow the proof for Case 4 in Lemma \ref{lem:last} with slight modification to get \eqref{lim_I11}. Therefore, we have finished the proof for this lemma.
\end{proof}

From \eqref{X-I}, we have 
\begin{equation}
        \lim_{\kappa \to 0} X_{\kappa}(p)=
        \begin{cases}
            1,\quad &0<p< p_{*}\\
            0,\quad &p_{*}<p<p_{1}.\\
        \end{cases} 
\end{equation}
and
\begin{equation}
    \lim_{\kappa \to 0}X_{\kappa}(p)=1,
\end{equation}
for $p_{e}\in (0,p_{1})$ and $p_{e}\in (p_{1},\frac{A}{2})$ respectively.

Therefore, for the boundary value problem \eqref{sub_eq}-\eqref{sub_bd2}, one has the following conclusion:
\begin{lem}\label{lem210}
    For equations \eqref{sub_eq} with boundary conditions \eqref{sub_bd1}, \eqref{sub_bd2},
    \begin{itemize}
        \item given $p_{e}\in (0,p_{*})$, there exists a subsonic-supersonic solution $U=U^{\epsilon}(x)\in C^{2}([0,1])$. Moreover, as $\epsilon$ goes to $0$, a boundary layer appears at $x=0$ and $x=1$ of the nozzle;
        \item given $p_{e}\in (p_{*},\frac{A}{2})/\{p_{1}\}$, there exists a subsonic solution $U=U^{\epsilon}(x)\in C^{2}([0,1])$. Moreover, as $\epsilon$ goes to $0$, a boundary layer appears at the endpoint $x=1$ of the nozzle.
    \end{itemize}
\end{lem}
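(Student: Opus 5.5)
The plan is to read Lemma \ref{lem210} off the inverse-function representation \eqref{sol_ode} together with Lemma \ref{lem29}, in the same way as Theorem \ref{thm1} was obtained from Lemma \ref{lem:last}.

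\textbf{Existence and regularity.} For each $\kappa>0$, I first show there is a unique $\alpha_{\kappa}$ with $\int_{p_{1}}^{p_{e}}\kappa(A-2s)/F(s;\alpha_{\kappa})\,ds=1$, arguing as in Lemma \ref{lem:exists_alpha}.
\begin{enumerate}
\item The sign of $\alpha_\kappa$ and the admissible range of $\alpha$ are fixed by \eqref{sub_hopf1}, \eqref{sub_hopf2}. This forces $F(\cdot;\alpha)$ to have a strict sign on the closed interval between $p_1$ and $p_e$.
\item On that range, $Q_\kappa(\alpha)$ is continuous and strictly monotone in $\alpha$.
\item $Q_\kappa(\alpha)$ tends to $0$ as $|\alpha|\to\infty$.
\item $Q_\kappa(\alpha)$ tends to $+\infty$ as $\alpha$ approaches the endpoint value where $F$ first vanishes on $[\min\{p_1,p_e\},\max\{p_1,p_e\}]$. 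For $p_e<p_*$ this endpoint is $\alpha=-f(p_*)$, since $F$ attains its maximum at $p_*$. For $p_e\in(p_*,A/2)\setminus\{p_1\}$ it is $\alpha=-f(p_e)$. The divergence comes from a logarithmic or $1/\sqrt{\cdot}$ blow-up, as in \eqref{ww}.
\end{enumerate}
With $\alpha_\kappa$ fixed, $X_\kappa$ given by \eqref{sol_ode} is smooth and strictly monotone, since its derivative $\kappa(A-2p)/F$ has one sign because $p<A/2$. Its inverse $p^\kappa$ is therefore $C^2$ (indeed smooth) on $[0,1]$ and solves \eqref{sub_eq}--\eqref{sub_bd2}. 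Recovering $\rho=1/u$ and $u=A-p$ then gives $U^\epsilon\in C^2([0,1])$.

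\textbf{Type of the solution.} The sonic pressure is $p_*=A/(\gamma+1)$. A state is subsonic when $p>p_*$ and supersonic when $p<p_*$, since $M^2=u/(\gamma p)\cdot\rho u$ with $\rho u=1$. By \eqref{sub_prior} and the monotonicity of $p^\kappa$:
\begin{itemize}
\item if $p_e<p_*$, then $p^\kappa$ decreases from $p_1>p_*$ to $p_e<p_*$, crossing $p_*$ exactly once, so the solution is subsonic--supersonic;
\item if $p_e>p_*$, then $p^\kappa$ stays strictly between $p_1$ and $p_e$, both of which exceed $p_*$, so the solution is subsonic.
\end{itemize}

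\textbf{Boundary layers.} Lemma \ref{lem29} gives the pointwise limit of $X_\kappa=I_\kappa/(1+I_\kappa)$.
\begin{itemize}
\item For $p_e<p_*$: $X_\kappa(p)\to0$ for $p\in(p_*,p_1)$ and $X_\kappa(p)\to1$ for $p\in(p_e,p_*)$. Inverting, $p^\kappa(x)\to p_*$ for every fixed $x\in(0,1)$, while $p^\kappa(0)=p_1$ and $p^\kappa(1)=p_e$. Hence the limit fails to match the data at both endpoints. Moreover, the whole pressure range $(p_*,p_1)$ is traversed in an $x$-interval $[0,X_\kappa(p)]$ whose length tends to $0$, and likewise near $x=1$. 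This is a layer at both ends.
\item For $p_e>p_*$: $X_\kappa(p)\to0$ for every $p$ strictly between $p_1$ and $p_e$. Then $p^\kappa(x)\to p_e$ on $(0,1]$, so the mismatch is concentrated at $x=0$.
\end{itemize}
I would need to reconcile the second case with the stated location ($x=1$, limit $U_1$). One would check whether the limit in \eqref{lim_I11} should be read as $I_\kappa\to+\infty$. That reading gives $X_\kappa\to1$, hence $p^\kappa\to p_1$ in the interior, with a jump to $p_e$ only at $x=1$, consistent with Theorem \ref{thm4}.

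\textbf{Main obstacle.} The delicate step is the sharp asymptotics of $\alpha_\kappa$ in the case $p_e<p_*$. The integrand there degenerates at the interior point $p_*$ rather than at an endpoint. One must show that the $\arctan$ term, of size $(g+\alpha_\kappa)^{-1/2}$, balances $1/\kappa$, giving $g(p_0)+\alpha_\kappa\sim c\kappa^2$. This scale is what makes both $I^1_\kappa$ and $I^2_\kappa$ split as in \eqref{lem29_1}--\eqref{lem29_5}. I would first try to obtain it by bracketing $F$ between two explicit quadratics with the same vertex and comparing with the exact formula \eqref{q1}.
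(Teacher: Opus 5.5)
Your route is the paper's own: its proof of Lemma~\ref{lem210} is just ``an immediate corollary of \eqref{sol_ode} and Lemma~\ref{lem29}''. The pieces you add are correct:
\begin{itemize}
\item existence and uniqueness of $\alpha_\kappa$ on the admissible ranges $\alpha<-f(p_*)$, $\alpha<-f(p_e)$ and $\alpha>-f(p_e)$, which the paper only defers to Subsection~\ref{section2.2};
\item smoothness of the inverse function;
\item the sonic classification via $M^2=(A-p)/(\gamma p)$;
\item the two-sided layer for $p_e<p_*$.
\end{itemize}
The sharp $\kappa^2$ scaling you flag as the main obstacle is not needed. The constraint $\alpha_\kappa<-f(p_*)$ alone gives $|F(s;\alpha_\kappa)|\ge|\nu|(s-p_*)^2$. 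Hence $X_\kappa(p)\le C_p\kappa$ on $(p_*,p_1)$ and $1-X_\kappa(p)\le C_p\kappa$ on $(p_e,p_*)$.

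The genuine problem is the second bullet, and the step you propose to close it would fail. Your first computation is the right one. Take $p_e>p_*$ and $p$ strictly between $p_1$ and $p_e$. The admissibility constraint on $\alpha_\kappa$ and the strict decrease of $f$ on $(p_*,A/2)$ give $|F(s;\alpha_\kappa)|>|f(p)-f(p_e)|>0$ for every $s$ between $p_1$ and $p$. So $0<X_\kappa(p)\le C_p\kappa\to0$, and $I_\kappa(p)\to0$, not $+\infty$. Consequently:
\begin{itemize}
\item $p^\kappa\to p_e$ on $(0,1]$;
\item $\partial_xp^\kappa(0)=\alpha_\kappa/(\kappa(A-2p_1))$ blows up like $-f(p_e)/(\kappa(A-2p_1))$;
\item $\partial_xp^\kappa(1)=(f(p_e)+\alpha_\kappa)/(\kappa(A-2p_e))$ is exponentially small.
\end{itemize}
The layer therefore sits at $x=0$, and the outer state is the subsonic state with pressure $p_e$, not $U_1$. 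A one-line check confirms this. With $\alpha_0=-f(p_e)$ from \eqref{lim_alpha2}, the limit equation $F(p^0;\alpha_0)=0$ allows only $p^0\in\{p_e,2p_*-p_e\}$, and $p_1$ is neither.

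So \eqref{lim_I11} is itself wrong, and reading it as $I_\kappa\to+\infty$ just imports the error. The paper's own proof of Lemma~\ref{lem29}(2) shows that the numerator $\int_{p_1}^{p}$ of \eqref{X-I} stays bounded while the denominator $\int_p^{p_e}$ diverges. That gives $I_\kappa\to0$. For $p_e\in(p_1,A/2)$ it refers to Case~4 of Lemma~\ref{lem:last}, where it found $I_\kappa\to0$ and a layer at $x=0$. The paper's ``immediate corollary'' inherits this error. The second bullet, and the matching part of Theorem~\ref{thm4}, cannot be proved as stated. Keep your computation and correct the conclusion: the layer is at $x=0$ and the interior limit has pressure $p_e$.
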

\begin{proof}
    This lemma is an immediate corollary of \eqref{sol_ode} and Lemma \ref{lem29}.
\end{proof}
Once Lemma \ref{lem210} holds, we have finished the proof of Theorem \ref{thm2}.

\section*{Acknowlegements} 
The research of the paper was supported in part by the Scientific Research Funds of Xiamen University of Technology YKJ25061R, and Fujian Natural Science Foundation of China (NO.2026J008314).


\bibliographystyle{abbrv}
\bibliography{ref1}

\end{document}